\providecommand{\U}[1]{\protect\rule{.1in}{.1in}}
\newtheorem{theorem}{Theorem}
\newtheorem{theorem*}{Example}
\newtheorem{conjecture}[theorem]{Conjecture}
\newtheorem{definition}[theorem]{Definition}
\newtheorem{lemma}[theorem]{Lemma}
\newtheorem{remark}[theorem]{Observation}
\newenvironment{proof}[1][Proof]{\noindent\textbf{#1.} }{\ \hfill \rule{0.5em}{0.5em}\bigskip}
\begin{document}

\title{Normal $5$-edge coloring of some more snarks superpositioned by the Petersen graph}
\author{Jelena Sedlar$^{1,3}$,\\Riste \v Skrekovski$^{2,3}$ \\[0.3cm] {\small $^{1}$ \textit{University of Split, Faculty of civil
engineering, architecture and geodesy, Croatia}}\\[0.1cm] {\small $^{2}$ \textit{University of Ljubljana, FMF, 1000 Ljubljana,
Slovenia }}\\[0.1cm] {\small $^{3}$ \textit{Faculty of Information Studies, 8000 Novo
Mesto, Slovenia }}\\[0.1cm] }
\maketitle

\begin{abstract}
\ A normal $5$-edge-coloring of a cubic graph is a coloring such that for
every edge the number of distinct colors incident to its end-vertices is $3$
or $5$ (and not $4$). The well known Petersen Coloring Conjecture is
equivalent to the statement that every bridgeless cubic graph has a normal
$5$-edge-coloring. All $3$-edge-colorings of a cubic graph are obviously
normal, so in order to establish the conjecture it is sufficient to consider
only snarks. In our previous paper [\textit{J. Sedlar, R. \v{S}krekovski,
Normal 5-edge-coloring of some snarks superpositioned by the Petersen graph,
Applied Mathematics and Computation 467 (2024) 128493}], we considered
superpositions of any snark $G$ along a cycle $C$ by two simple supervertices
and by the superedge obtained from the Petersen graph, but only for some of
the possible ways of connecting supervertices and superedges. The present
paper is a continuation of that paper, herein we consider superpositions by
the Petersen graph for all the remaining connections and establish that for
all of them the Petersen Coloring Conjecture holds.

\end{abstract}

\textit{Keywords:} normal edge coloring; cubic graph; snark; superposition;
Petersen Coloring Conjecture.

\textit{AMS Subject Classification numbers:} 05C15

\section{Introduction}

A (\emph{proper})\emph{ }$k$\emph{-edge-coloring} of a graph $G$ is any
mapping $\sigma:E(G)\rightarrow\{1,\ldots,k\}$ such that any pair of adjacent
edges of $G$ receives distinct colors by $\sigma$. Let $\sigma$ be a
$k$-edge-coloring of $G$ and $v\in V(G),$ then by $\sigma(v)$ we denote the
set of colors of all edges incident to $v.$

\begin{definition}
Let $G$ be a bridgeless cubic graph, $\sigma$ a proper edge-coloring of $G$
and $uv$ an edge of $G.$ The edge $uv$ is \emph{poor} \emph{(}resp\emph{.}
\emph{rich)} in $\sigma$ if $\left\vert \sigma(u)\cup\sigma(v)\right\vert =3$
\emph{(}resp\emph{.} $\left\vert \sigma(u)\cup\sigma(v)\right\vert
=5$\emph{).}
\end{definition}

A proper edge coloring of a cubic graph $G$ is \emph{normal}, if every edge of
$G$ is poor or rich. Normal edge colorings were introduced by Jaeger in
\cite{Jaeger1985}. The \emph{normal chromatic index} of a cubic graph $G,$
denoted by $\chi_{N}^{\prime}(G),$ is the smallest number $k$ such that $G$
has a normal $k$-edge coloring. Notice that $\chi_{N}^{\prime}(G)$ is at least
$3$ and it never equals $4$. It is known that the well known Petersen Coloring
Conjecture can be restated as follows \cite{Jaeger1985}.

\begin{conjecture}
\label{Con_normal}Let $G$ be a bridgeless cubic graph, then $\chi_{N}^{\prime
}(G)\leq5.$
\end{conjecture}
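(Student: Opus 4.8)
The statement to be established is the Petersen Coloring Conjecture in its normal-coloring form, so the realistic plan is not a frontal assault but a two-stage reduction followed by a structural induction, mirroring the strategy the present paper develops for a concrete family. First I would reduce the problem to snarks. If $G$ admits a proper $3$-edge-coloring $\sigma$, then every vertex $u$ satisfies $\sigma(u)=\{1,2,3\}$, so for every edge $uv$ we have $|\sigma(u)\cup\sigma(v)|=3$ and the edge is poor; hence $\chi_N'(G)\le 3\le 5$ trivially. Thus it suffices to prove the bound for cubic graphs that are not $3$-edge-colorable, i.e.\ for snarks. I would also record the equivalent reformulation from \cite{Jaeger1985}: a normal $5$-edge-coloring of $G$ is precisely a Petersen coloring, a homomorphism of edge-colorings $G\to P$, where $P$ is the Petersen graph and the five ``colors'' are the color classes of a fixed normal coloring of $P$ itself. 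This reformulation is what makes Petersen-based superpositions the natural tool.

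Second, I would attempt a structural induction over a decomposition of a snark. The plan is to cut $G$ along a small edge cut or along a cycle $C$, view the resulting pieces as supervertices and superedges, and build a normal coloring of $G$ from normal colorings of the pieces by matching the colors appearing on the interface (the cut edges). Concretely, for each inserted Petersen superedge one fixes a normal coloring of $P$, reads off the triple or quintuple of colors it forces on its attaching edges, and then propagates these constraints around $C$, checking that at every connection the induced coloring keeps each boundary edge poor or rich and never lets it see exactly four colors. Handling a single superposition type then reduces to a finite case analysis over the admissible interface color patterns at the supervertices and at the Petersen superedges, together with a verification that these patterns can always be made mutually compatible around $C$.

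The main obstacle is that this program cannot, with present techniques, be completed for \emph{all} bridgeless cubic graphs: there is no known decomposition that generates every snark from pieces on which the extension step is fully controllable, so the general conjecture remains open and any honest plan must acknowledge this. What is achievable---and what this paper carries out---is to fix one superposition scheme (superposition by the Petersen graph along a cycle, via two simple supervertices and the Petersen superedge) and to verify the bound for every remaining way of connecting supervertices to superedges, thereby complementing the cases treated in the authors' previous work. The genuinely hard part is therefore the exhaustive interface analysis: ensuring that across all connection types one can always select local normal colorings of the Petersen pieces whose boundary color patterns glue into a global normal coloring, with the poor/rich dichotomy never degenerating into the forbidden case of four colors at any cut edge.
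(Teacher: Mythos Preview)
The statement you were asked to prove is Conjecture~\ref{Con_normal}, the Petersen Coloring Conjecture itself, which the paper states as an \emph{open} conjecture and does not prove. There is therefore no proof in the paper against which your attempt can be compared. You correctly recognize this: your proposal is not a proof but a strategic outline that explicitly concedes the general statement remains open.

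That said, your outline conflates two different things. The reduction to snarks and the Jaeger reformulation are standard and fine. But the ``structural induction over a decomposition of a snark'' you sketch is not what the paper does, nor is it known to work: there is no decomposition theorem asserting that every snark arises by superposition from smaller pieces, so an induction of the kind you describe has no base and no inductive step in general. The paper's actual contribution (Theorems~\ref{tm_stari} and~\ref{tm_main}) goes in the opposite direction: it \emph{assumes} a normal $5$-edge-coloring of the base snark $G$ and extends it to the superposition $\tilde G$, thereby verifying the conjecture for a specific constructed family, not reducing the general conjecture to smaller instances. Your description of the ``hard part'' as an exhaustive interface analysis is accurate for what the paper accomplishes, but presenting this as a plan toward the full conjecture overstates what the method can deliver.
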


\noindent Since any proper $3$-edge-coloring of a cubic graph $G$ is a normal
edge coloring in which every edge is poor, Conjecture \ref{Con_normal}
obviously holds for every $3$-edge-colorable graph. According to Vizing's
theorem, every cubic graph is either $3$-edge-colorable or $4$-edge-colorable,
so to prove Conjecture \ref{Con_normal} it remains to establish that it holds
for all bridgeless cubic graphs which are not $3$-edge-colorable.

\paragraph{Superpositioning snarks.}

Cubic graphs which are not $3$-edge-colorable are usually considered under the
name of snarks \cite{MazzuoccoloStephen,NedelaSkovieraSurvey}. In order to
avoid trivial cases, the definition of snark usually includes some additional
requirements on connectivity. Since in this paper such requirements are not
essential, we will go with the broad definition of a \emph{snark} being any
bridgeless cubic graph which is not $3$-edge-colorable. The existence of a
normal $5$-edge-coloring for some families of snarks has already been
established, see for example \cite{MazzuccoloLupekhine, Hagglund2014}.

The superposition is the most general known method for constructing new snarks
from the existing ones \cite{Adelson1973, Descartes1946, KocholSuperposition,
Kochol2, MacajovaRevisited}. In this paper we consider some superpositioned
snarks, so let us first introduce this method.

\begin{definition}
A \emph{multipole} $M=(V,E,S)$ consists of a set of vertices $V=V(M)$, a set
of edges $E=E(M)$, and a set of semiedges $S=S(M)$. A semiedge is incident
either to one vertex or to another semiedge in which case a pair of incident
semiedges forms a so called \emph{isolated edge} within the multipole.
\end{definition}

\begin{figure}[h]
\begin{center}%
\begin{tabular}
[t]{llll}%
$A:$ & \raisebox{-0.9\height}{\includegraphics[scale=0.6]{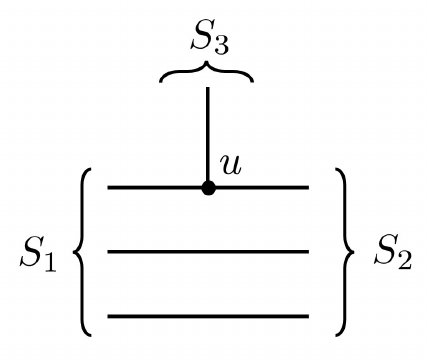}}\hspace
{2cm} & $A^{\prime}:$ &
\raisebox{-0.9\height}{\includegraphics[scale=0.6]{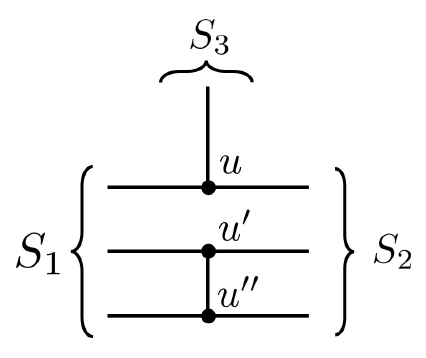}}
\end{tabular}
\end{center}
\caption{Supervertices $A$ and $A^{\prime},$ with connectors $S_{1},$ $S_{2}$
and $S_{3}$, where $S_{1}$ is $1$-connector and $S_{2}$ and $S_{3}$ are
$3$-connectors.}%
\label{Fig_supervertex}%
\end{figure}

For an illustration of multipole see Figure \ref{Fig_supervertex}. If
$\left\vert S(M)\right\vert =k,$ then $M$ is also called a $k$\emph{-pole}.
Throughout the paper we consider only \emph{cubic} multipoles, i.e. multipoles
in which every vertex is incident to precisely three edges or semiedges. If
the set of semiedges $S$ is partitioned into $n$ subsets $S_{i}$ such that
$\left\vert S_{i}\right\vert =k_{i}$, this results with a $(k_{1},\ldots
,k_{n})$\emph{-pole} $M=(V,E,S_{1},\ldots,S_{n}).$ Sets $S_{i}$ are called
\emph{connectors} of $M.$ A connector $S_{i}$ which has $k_{i}$ semiedges is
also called $k_{i}$\emph{-connector}.

\begin{definition}
A \emph{supervertex} \emph{(}resp\emph{.} \emph{superedge)} is a multipole
with three \emph{(}resp\emph{.} two\emph{)} connectors.
\end{definition}

Let $G$ be a snark and $u,v$ a pair of non-adjacent vertices of $G.$ A
superedge $G_{u,v}$ is obtained from $G$ by removing vertices $u$ and $v,$ and
then replacing three edges incident to a vertex $u$ (resp. $v$) in $G$ by
three semiedges in $G_{u,v}$ which form a connector.

\begin{definition}
A \emph{proper }superedge is any superedge $G_{u,v},$ where $G$ is a snark, or
an isolated edge.
\end{definition}

\noindent The definition of proper superedge from \cite{KocholSuperposition}
is much wider, but this simple definition suffices for the purposes of the
present paper. Now, for a cubic graph $G=(V,E)$ we define a function
$\mathcal{V}$ which maps a vertex $v\in V$ to a supervertex
$\mathcal{\mathcal{V}}(v),$ and a function $\mathcal{E}$ which maps an edge
$e\in E$ to a superedge $\mathcal{E}(e)$. A superposition $G(\mathcal{V}%
,\mathcal{E})$ is obtained if the following holds: semiedges of a connector in
$\mathcal{V}(v)$ are identified with semiedges of a connector in
$\mathcal{E}(e)$ if and only if $e$ is incident to $v$ in $G.$ Of course, for
this to be possible the corresponding connectors of $\mathcal{V}(v)$ and
$\mathcal{E}(e)$ must be of equal size. Notice that a superposition
$G(\mathcal{V},\mathcal{E})$ is again a cubic graph, and it is called
\emph{proper} if every superedge $\mathcal{E}(e)$ is proper. Moreover, we may
assume that some of the vertices and edges of $G$ are superpositioned by
themselves, formally such a vertex (resp. an edge) is superpositioned by a
trivial supervertex (resp. a superedge) which consists of a single vertex with
three incident semiedges (resp. an isolated edge). The following theorem is
established in \cite{KocholSuperposition}, mind that it is stated there for
snarks of girth $\geq5$ which are cyclically $4$-edge connected, but it also
holds for snarks of smaller girth.

\begin{theorem}
\label{Tm_Kochol}For a snark $G,$ a proper superposition $G(\mathcal{V}%
,\mathcal{E})$ is also a snark.
\end{theorem}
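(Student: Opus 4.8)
The plan is to verify the two defining properties of a snark for $H := G(\mathcal{V},\mathcal{E})$: that $H$ is bridgeless, and that $H$ is not $3$-edge-colorable. Cubicity is already noted in the text, so only these two points remain.

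For non-$3$-colorability I would work with the standard correspondence: a cubic (multi)graph is properly $3$-edge-colorable if and only if it admits a nowhere-zero $\mathbb{Z}_2^2$-flow, i.e.\ an assignment of colors from $\mathbb{Z}_2^2\setminus\{0\}$ to its edges so that the three colors at every vertex sum to $0$; for a multipole one also colors the semiedges, keeping this vanishing condition at every vertex. The central point — call it the \emph{superedge lemma} — is: if $M$ is a proper superedge with connectors $S_1,S_2$ and $\phi$ is any proper $3$-edge-coloring of $M$ (semiedges included), then $\sum_{s\in S_1}\phi(s)=\sum_{s\in S_2}\phi(s)\neq 0$. The equality follows by summing the vertex condition over all vertices of $M$, since each internal edge is counted twice. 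For the non-vanishing: if $M$ is an isolated edge, the single common color is nonzero; if $M=G'_{u,v}$ for a snark $G'$, then $\sum_{s\in S_1}\phi(s)=0$ with all values nonzero would force $S_1$ (hence also $S_2$) to carry three distinct colors, so re-inserting the vertices $u$ and $v$ would extend $\phi$ to a proper $3$-edge-coloring of $G'$, contradicting that $G'$ is a snark.

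Granting the lemma, suppose $\phi$ is a proper $3$-edge-coloring of $H$. Restricting $\phi$ to a superedge $\mathcal{E}(e)$ is proper (its semiedges have become edges of $H$), so both connectors of $\mathcal{E}(e)$ have the same nonzero color-sum; define $\psi(e)\in\mathbb{Z}_2^2\setminus\{0\}$ to be this common value. For a vertex $v\in V(G)$ with incident edges $e_1,e_2,e_3$, restricting $\phi$ to the supervertex $\mathcal{V}(v)$ is also proper, and summing its vertex conditions gives $\sum_{i=1}^{3}\big(\sum_{s\in S^{i}}\phi(s)\big)=0$, where $S^1,S^2,S^3$ are the connectors of $\mathcal{V}(v)$. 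Each $S^{i}$ is identified in $H$ with a connector of $\mathcal{E}(e_i)$, so $\sum_{s\in S^{i}}\phi(s)=\psi(e_i)$, whence $\psi(e_1)+\psi(e_2)+\psi(e_3)=0$. Thus $\psi$ is a nowhere-zero $\mathbb{Z}_2^2$-flow on $G$, so $G$ is $3$-edge-colorable, contradicting that $G$ is a snark.

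For bridgelessness I would argue by contradiction: if $b$ is a bridge of $H$, then $b$ lies inside some supervertex $\mathcal{V}(v)$, inside some superedge $\mathcal{E}(e)$, or is an edge formed by identifying a semiedge of a supervertex with a semiedge of a superedge. A proper superedge is either an isolated edge (which has no internal edge) or $G'_{u,v}$ with $G'$ bridgeless cubic, so deleting $b$ cannot separate its interior from both of its connectors; tracing this back through the $2$-edge-connectivity of $G$ and of the underlying snarks shows that in every case a bridge of $H$ would produce a single-edge cut of $G$, i.e.\ a bridge of $G$, which is impossible. The main obstacle is precisely this last, somewhat technical case analysis — bookkeeping how connectivity inside the constituent multipoles interacts with the $2$-edge-connectivity of $G$; by contrast the non-$3$-colorability argument is the conceptual core and becomes short once the superedge lemma is established.
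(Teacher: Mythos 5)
The paper does not actually prove this statement: Theorem \ref{Tm_Kochol} is imported from Kochol's paper \cite{KocholSuperposition}, with only the remark that the girth and cyclic-connectivity hypotheses used there are not essential. So there is no in-paper proof to compare against, and your proposal must be judged on its own merits. Your non-$3$-colorability argument is correct and is essentially Kochol's original one: encode proper $3$-edge-colorings of cubic multipoles as nowhere-zero $\mathbb{Z}_2^2$-valued assignments with vanishing vertex sums; sum the vertex conditions over a superedge to see that its two connector sums agree; show the common value is nonzero for a proper superedge (trivially for an isolated edge, and for $G'_{u,v}$ because three nonzero Klein-group elements summing to zero are pairwise distinct, so $u$ and $v$ could be reinserted to $3$-color the snark $G'$); then push the connector sums down to a nowhere-zero, vertex-balanced assignment on $E(G)$, i.e.\ a proper $3$-edge-coloring of $G$, a contradiction. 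The only points worth making explicit in a write-up are the repeated use of the fact that three nonzero elements of $\mathbb{Z}_2^2$ sum to zero iff they are distinct, and that isolated edges inside a multipole contribute nothing to the total semiedge sum.

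The bridgelessness half is where your proposal has a genuine gap, and you flag it yourself. ``Tracing this back through the $2$-edge-connectivity of $G$'' is not yet an argument, and with the paper's literal definitions it cannot be completed without an extra hypothesis: a supervertex here is an arbitrary cubic multipole with three connectors, and nothing forbids it from containing a piece, cut off by a single internal edge $b$ (or forming a whole component), that carries no semiedges at all; such a piece meets the rest of the superposition only through $b$, which is then a bridge of $H$ regardless of how well connected $G$ and the superedges are. For superedges this pathology is excluded exactly as you hint --- a component of $G'_{u,v}-b$ with no semiedges would exhibit $b$ as a bridge of the bridgeless $G'$ --- but for supervertices it must either be assumed away or verified for the specific supervertices in play (it does hold for $A$, $A'$ and the trivial supervertex, which is all this paper needs). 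The deferred case analysis (bridge inside a supervertex, inside a superedge, or on a connecting edge) is the actual content of that half and needs to be written out rather than asserted.
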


\paragraph{Normal colorings of superpositioned snarks.}

Normal $5$-edge-colorings of a family of superpositioned snarks are considered
in \cite{SedSkrePaper1} and \cite{SedSkePaper2}. In \cite{SedSkrePaper1}, a
snark $G$ is superpositioned along a cycle $C$ by supervertices $A$ or
$A^{\prime}$ and superedges $(P_{10})_{u,v},$ where $P_{10}$ denotes the
Petersen graph. Notice that the three semiedges of the corresponding
$3$-connectors of the supervertex $A$ or $A^{\prime}$ and the superedge
$(P_{10})_{u,v}$ can be identified in $3!=6$ ways, and in \cite{SedSkrePaper1}
only one particular way of semiedge identification is considered. Assuming
that $G$ has a normal $5$-edge-coloring $\sigma$, it is established there that
a superposition also has a normal $5$-edge-coloring, but only for even length
cycles $C.$ In the case of odd length cycles $C,$ it is established that in
some cases a normal $5$-edge-coloring of a superposition cannot be constructed
without changing the colors on the edges of $G$ outside of the cycle $C.$

In \cite{SedSkePaper2}, similar superpositions are considered, where the
Flower snarks $J_{r}$ are taken as superedges. To construct a normal
$5$-edge-coloring of such superpositioned snarks, a sufficient condition is
developed for the existence of a desired coloring for a superposition by any
superedge $H_{u,v}$, and the application of this condition yields a normal
$5$-edge-coloring of all superpositions by Flower snark superedges
$(J_{r})_{u,v}$.

Let us remark that the condition from \cite{SedSkePaper2} does not apply to
superpositions by the Petersen superedge $(P_{10})_{u,v},$ since it requires
that the vertices $u$ and $v$ are on the distance $\geq3$ which is not the
case with $P_{10}.$ Thus, the present paper is a continuation of
\cite{SedSkrePaper1}, herein we construct a normal $5$-edge-colorings for all
remaining superpositions by the Petersen superedge, i.e. for all remaining
ways of semiedge identification not considered in \cite{SedSkrePaper1}.

\section{Preliminaries}

Let us first introduce some necessary notions regarding multipoles and their
colorings. Let $M=(V,E,S)$ be a multipole and $V^{\prime}\subseteq V.$ A
multipole $M^{\prime}=(V^{\prime},E^{\prime},S^{\prime})$ is an \emph{induced
submultipole} of $M$ if $E^{\prime}$ consists of all edges of $M$ with both
end-vertices in $V^{\prime},$ and $S^{\prime}$ consists of all semiedges of
$M$ with the only end-vertex in $V^{\prime}$ and of a semiedge for each edge
of $M$ with precisely one end in $V^{\prime}.$ An induced submultipole is also
denoted by $M^{\prime}=M[V^{\prime}].$ For a normal $5$-edge-coloring $\sigma$
of $M,$ the \emph{restriction} $\sigma^{\prime}=\left.  \sigma\right\vert
_{M^{\prime}}$ of $\sigma$ to $M^{\prime}$ is defined by $\sigma^{\prime
}(e)=\sigma(e)$ for $e\in E^{\prime},$ $\sigma^{\prime}(s)=\sigma(s)$ for
$s\in S^{\prime}\cap S$ and $\sigma^{\prime}(s)=\sigma(e_{s})$ for $s\in
S^{\prime}\backslash S$ where $e_{s}$ denotes the edge such that $s$ is a
semiedge of $e_{s}.$ Let $M$ be a multipole, $M_{i}$ an induced submultipole
of $M$ with a normal $5$-edge-coloring $\sigma_{i}$ for $i=1,\ldots,k.$
Colorings $\sigma_{i}$ are \emph{compatible}, if there exists a normal
$5$-edge-coloring $\sigma^{\prime}$ of an induced submultipole $M^{\prime
}=M[\cup_{i=1}^{k}V_{i}]$ such that $\left.  \sigma^{\prime}\right\vert
_{M^{\prime}}=\sigma_{i}$ for every $i=1,\ldots,k.$

\begin{figure}[h]
\begin{center}%
\begin{tabular}
[t]{llll}%
a) & \raisebox{-0.9\height}{\includegraphics[scale=0.6]{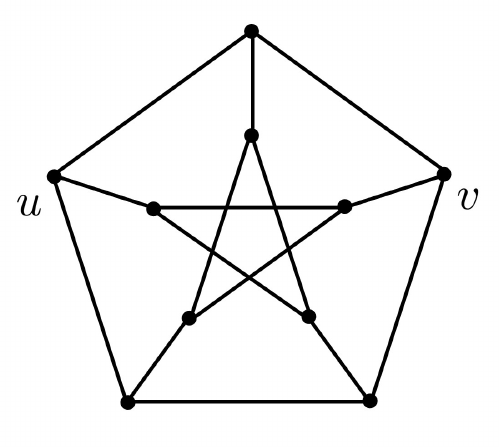}} & b) &
\raisebox{-0.9\height}{\includegraphics[scale=0.6]{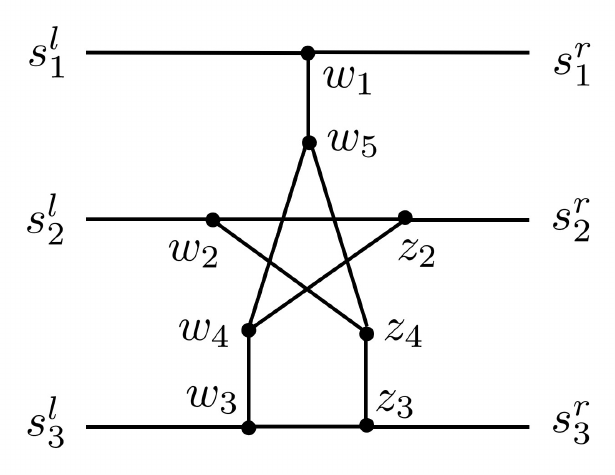}}
\end{tabular}
\end{center}
\caption{The figure shows: a) the Petersen graph $P_{10}$ with a pair of
nonadjacent vertices $u$ and $v$, b) the corresponding superedge
$(P_{10})_{u,v}.$}%
\label{Fig_PetersenSuperedge}%
\end{figure}

A \emph{normal }$5$\emph{-edge-coloring} of a cubic multipole is any coloring
of its edges and semiedges by $5$ colors such that every edge of $M$ is
normal. Let $M$ be a cubic multipole, $s$ a semiedge of $M$ and $\sigma$ a
normal $5$-edge-coloring of $M,$ the \emph{color scheme} $\sigma\lbrack s]$ of
a semiedge $s$ is defined by $\sigma\lbrack s]=(i,\{j,k\})$ where
$\sigma(s)=i$ and $\{j,k\}$ is the set of the remaining two colors incident to
the end-vertex of $s.$ A pair of color schemes $(i_{1},\{j_{1},k_{1}\})$ and
$(i_{2},\{j_{2},k_{2}\})$ is \emph{consistent} if $i_{1}=i_{2}$ and
$\{j_{1},k_{1}\}$ is equal either to $\{j_{2},k_{2}\}\ $or\ to
$\{1,2,3,4,5\}\backslash\{i_{2},j_{2},k_{2}\}.$ For consistent color schemes
we also write $(i_{1},\{j_{1},k_{1}\})\approx(i_{2},\{j_{2},k_{2}\}).$

Let us now focus our attention to the type of superpositions considered in
this paper. We will consider any snark $G$ and a cycle $C$ in it. Vertices of
$C$ are superpositioned by supervertices $A$ and $A^{\prime}$ from Figure
\ref{Fig_supervertex}, and edges of $C$ are superpositioned by the superedge
obtained from the Petersen graph. Namely, let $P_{10}$ denote the Petersen
graph shown in Figure \ref{Fig_PetersenSuperedge}.a), $u,v$ a pair of
nonadjacent vertices of $P_{10},$ and $(P_{10})_{u,v}$ a corresponding proper
superedge which is illustrated by Figure \ref{Fig_PetersenSuperedge}.b). A
connector arising from removing $u$ (resp. $v$) is called the \emph{left}
(resp. \emph{right}) connector of $(P_{10})_{u,v}$ and its semiedges are
called the \emph{left} (resp. \emph{right}) semiedges and denoted by
$s_{j}^{l}$ (resp. $s_{j}^{r}$) for $j=1,2,3,$ as it is shown in Figure
\ref{Fig_PetersenSuperedge}.b).

\begin{figure}[h]
\begin{center}
\includegraphics[scale=0.6]{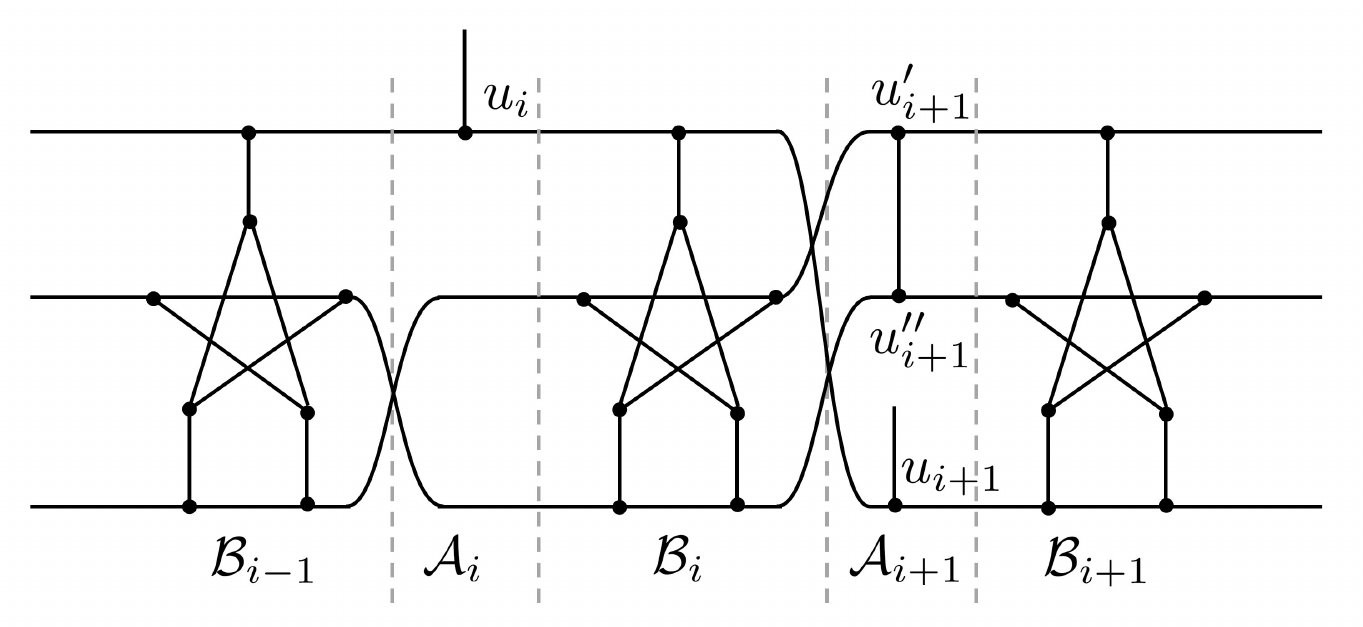}
\end{center}
\caption{A superposition of the vertices $u_{i}$ and $u_{i+1}$ of a cycle $C$
in a snark $G$ by superedges $\mathcal{A}_{i}=A$ and $\mathcal{A}%
_{i+1}=A^{\prime},$ and edges $e_{i-1}=u_{i-1}u_{i},$ $e_{i}=u_{i}u_{i+1}$ and
$e_{i+1}=u_{i+1}u_{i+2}$ by superedges $\mathcal{B}_{i-1},$ $\mathcal{B}_{i}$
and $\mathcal{B}_{i+1}$ so that $p_{i-1}=(1,3,2),$ $p_{i}=(3,1,2),$ $d_{i}=1,$
and $d_{i+1}=3.$}%
\label{Fig_superpositionPermutation}%
\end{figure}

Now, assume that a cycle $C$ in a snark $G$ is denoted by $C=u_{0}u_{1}\cdots
u_{g-1}u_{0}.$ Denote the edges of the cycle $C$ by $e_{i}=u_{i}u_{i+1}$ for
$i=0,\ldots,g-1,$ where indices are taken modulo $g$. Also, let $v_{i}$ denote
the neighbor of $u_{i}$ distinct from $u_{i-1}$ and $u_{i+1},$ and let
$f_{i}=u_{i}v_{i}.$ Let $G_{C}(\mathcal{A},\mathcal{B})$ be a superposition
such that for vertices and edges of a cycle $C$ in $G$ it holds that
$\mathcal{A}(u_{i})\in\{A,A^{\prime}\}$, $\mathcal{\mathcal{B}}(e_{i}%
)=(P_{10})_{u,v},$ and all other vertices and edges of $G$ are superpositioned
by themselves.\ For short, we will denote $\mathcal{A(}u_{i})=\mathcal{A}_{i}$
and $\mathcal{B}(e_{i})=\mathcal{B}_{i}.$ Also, if $\mathcal{A}_{i}=A$ then
its only vertex is denoted by $u_{i}$, and if $\mathcal{A}_{i}=A^{\prime}$
then its three vertices are denoted by $u_{i},$ $u_{i}^{\prime}$ and
$u_{i}^{\prime\prime}$.

In order to describe more precisely how semiedges of $\mathcal{\mathcal{B}%
}_{i-1},$ $\mathcal{A}_{i}$ and $\mathcal{B}_{i}$ are identified, which is
illustrated by Figure \ref{Fig_superpositionPermutation}, each superedge
$\mathcal{B}_{i}$ is associated with a permutation $p_{i}$ of the set
$\{1,2,3\}$ which is called the \emph{right semiedge permutation} and a
$d_{i}\in\{1,2,3\}$ which is called the \emph{dock}. Then, the connection of
$\mathcal{B}_{i-1}$ and $\mathcal{B}_{i}$ through $\mathcal{A}_{i}$ arises so
that a right semiedge $s_{j}^{r}$ of $\mathcal{B}_{i-1}$ is identified with a
left semiedge $s_{p_{i-1}(j)}^{l}$ of $\mathcal{B}_{i},$ and then the edge
arising from identifying $s_{j}^{r}$ and $s_{p_{i-1}(j)}^{l}$ for
$p_{i-1}(j)=d_{i}$ is subdivided by the vertex $u_{i}$ of $\mathcal{A}_{i}.$
If $\mathcal{A}_{i}=A^{\prime},$ the remaining two edges arising from the
identification are subdivided by $u_{i}^{\prime}$ and $u_{i}^{\prime\prime},$
and an edge $u_{i}^{\prime}u_{i}^{\prime\prime}$ inserted afterwards.

In \cite{SedSkrePaper1}, this class of superpositions of $G$ by the Petersen
graph is also considered, but only superpositions such that $p_{i}(1)=1$ and
$d_{i}=1$ for every $i=0,\ldots,g-1.$ It is established there that for such
superpositions a normal $5$-edge-coloring can be constructed from a normal
$5$-edge-coloring of $G$ by preserving colors outside of the cycle $C,$ but
only for even length cycles $C$. For odd length cycles $C,$ it is established
that at least in one case a normal $5$-edge-coloring of a superposition cannot
be constructed without changing colors of edges outside the cycle $C.$ In this
paper, we will consider all the remaining superpositions of the mentioned
class, i.e. superpositions with either $p_{i}(1)\not =1$ or $d_{i}\not =1$ for
some $i\in\{0,\ldots,g-1\}.$

\begin{figure}[h]
\begin{center}
\includegraphics[scale=0.7]{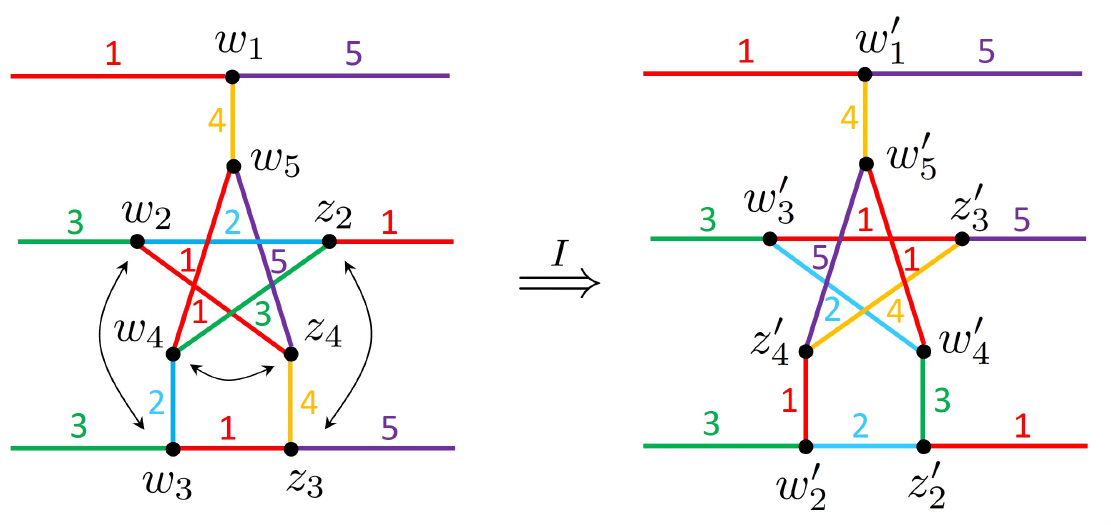}
\end{center}
\caption{The figure shows a normal $5$-edge-coloring $\sigma$ of
$(P_{10})_{u,v}$ and the application of $I$ to $(P_{10})_{u,v}$ which yields a
normal $5$-edge-coloring $I(\sigma).$}%
\label{Fig_isomorphism}%
\end{figure}

We also need the following isomorphism of the superedge $(P_{10})_{u,v}.$
Assuming vertex notation as in Figure \ref{Fig_PetersenSuperedge}, we define
an isomorphism $I$ of $(P_{10})_{u,v}$ by the permutation of vertices $(w_{2}$
$w_{3})(z_{2}$ $z_{3})(w_{4}$ $z_{4})$ considered as a product of three
transpositions, see Figure \ref{Fig_isomorphism}. Notice that $I$ is
involution, i.e. $I=I^{-1}.$ For the simplicity sake, we will denote $I(v)$ by
$v^{\prime}.$ If $\sigma$ is a normal edge-coloring of $(P_{10})_{u,v}$ and we
assume that in the isomorphism $I$ the colors of edges are mapped together
with edges, this yields another normal edge-coloring of $(P_{10})_{u,v}.$
Formally, for a given normal $5$-edge-coloring $\sigma$ of $(P_{10})_{u,v}$ we
define an edge-coloring $\sigma_{I}$ of $(P_{10})_{u,v}$ by $\sigma
_{I}(x)=\sigma(I(x))$ for any (semi)edge $x$ of $(P_{10})_{u,v}.$ The
following observation on $\sigma_{I}$ is illustrated by Figure
\ref{Fig_isomorphism}.

\begin{remark}
If $\sigma$ is a normal $5$-edge-coloring of $(P_{10})_{u,v},$ then
$\sigma_{I}$ is also a normal $5$-edge-coloring of $(P_{10})_{u,v}$.
\end{remark}

Finally, a path connecting a pair of semiedges of a multipole is called a
\emph{Kempe }$(i,j)$\emph{-chain} in an edge-coloring $\sigma$ if the colors
$i$ and $j$ alternate along the edges of the path. For example, both colorings
of $(P_{10})_{u,v}$ from Figure \ref{Fig_isomorphism} have a Kempe
$(3,2)$-chain connecting left semiedges $s_{2}^{l}$ and $s_{3}^{l}.$

\section{Auxiliary results}

\begin{figure}[h]
\begin{center}%
\begin{tabular}
[t]{llll}%
a) & \raisebox{-0.9\height}{\includegraphics[scale=0.6]{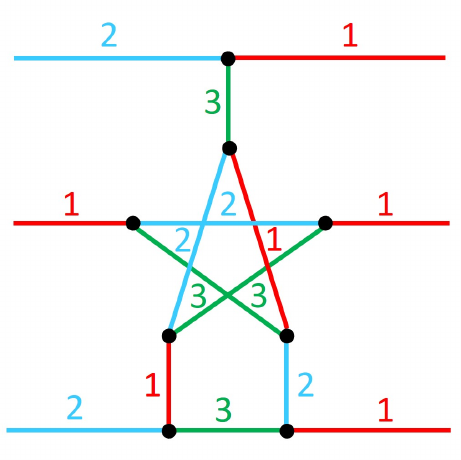}} & b) &
\raisebox{-0.9\height}{\includegraphics[scale=0.6]{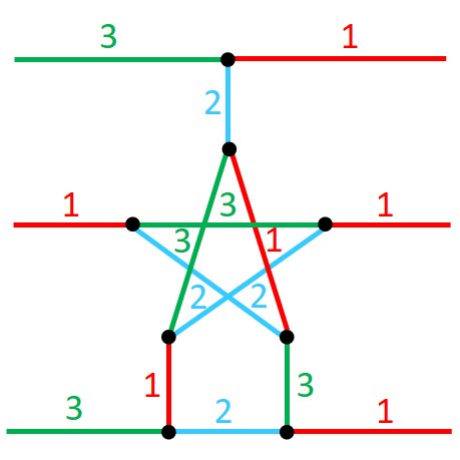}}
\end{tabular}
\end{center}
\caption{The coloring: a) $R(1,2,3)$ and b) $\bar{R}(1,2,3)$ of the superedge
$(P_{10})_{u,v}.$}%
\label{Fig_rightColoring}%
\end{figure}For the superedge $(P_{10})_{u,v}$ we define a normal
$5$-edge-coloring $R(1,2,3)$ as in Figure \ref{Fig_rightColoring}. Notice that
the coloring $R(1,2,3)$ has a Kempe $(2,3)$-chain $P^{l}$ connecting left
semiedges $s_{1}^{l}$ and $s_{3}^{l}.$ A \emph{complementary} coloring
$\bar{R}(1,2,3)$ of $R(1,2,3)$ is obtained from $R(1,2,3)$ by swapping colors
$2$ and $3$ along the Kempe chain $P^{l}.$ Notice that the complementary
coloring $\bar{R}(1,2,3)$ remains a normal $5$-edge-coloring of $(P_{10}%
)_{u,v}.$ Finally, let $c$ be a permutation of the set $\{1,2,3,4,5\}$ of five
colors such that $c(i)=c_{i}.$ Then the coloring $R(c_{1},c_{2},c_{3})$ and
its complementary coloring $\bar{R}(c_{1},c_{2},c_{3})$ are obtained by
applying color permutation $c$ to $R(1,2,3)$ and $\bar{R}(1,2,3)\mathrm{,}$
respectively. Notice that $\bar{R}(1,2,3)=R(1,3,2)\mathrm{,}$ and consequently
$\bar{R}(c_{1},c_{2},c_{3})=R(c_{1},c_{3},c_{2})$ for any permutation $c.$ By
$R_{I}(c_{1},c_{2},c_{3})$ and $\bar{R}_{I}(c_{1},c_{2},c_{3})$ we denote the
coloring obtained by applying the isomorphism $I$ of $(P_{10})_{u,v}$ to
$R(c_{1},c_{2},c_{3})$ and $\bar{R}(c_{1},c_{2},c_{3})\mathrm{,}$ respectively.

\begin{remark}
\label{Obs_rightPoor}Coloring $R(c_{1},c_{2},c_{3})$ of $(P_{10})_{u,v}$ and
its variants $\bar{R}(c_{1},c_{2},c_{3}),$ $R_{I}(c_{1},c_{2},c_{3}),$
$\bar{R}_{I}(c_{1},c_{2},c_{3})$ use only three colors, so they contain $9$
poor edges.
\end{remark}

\begin{figure}[h]
\begin{center}
\includegraphics[scale=0.6]{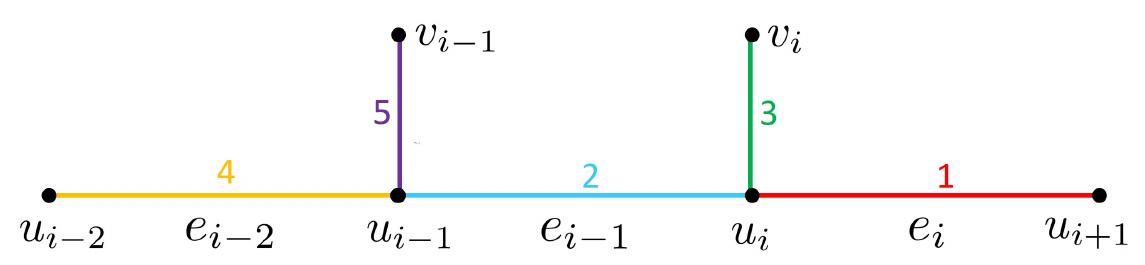}
\end{center}
\caption{A normal $5$-edge coloring $\sigma$ of the edges of $G$ incident to
vertices $u_{i-1}$ and $u_{i}$.}%
\label{Fig_coloringCycle}%
\end{figure}

Notice that the coloring $R$ of the superedge $(P_{10})_{u,v}$, and its
variants $\bar{R}\mathrm{,}$ $R_{I}\mathrm{,}$ $\bar{R}_{I}$, are defined for
any three colors $c_{1},$ $c_{2},$ $c_{3}$. Which three colors will be used in
a superedge $\mathcal{B}_{i}=(P_{10})_{u,v}$ within a superposition $\tilde
{G}=G_{C}(\mathcal{A},\mathcal{B})$ depends on the normal $5$-edge-coloring
$\sigma$ of $G.$ Namely, let $G$ be a snark with a normal $5$-edge-coloring
$\sigma,$ $C$ a cycle in $G$ and $\tilde{G}=G_{C}(\mathcal{A},\mathcal{B})$ a
superposition of $G.$ A coloring $R(c_{1},c_{2},c_{3})$ (resp. $\bar{R}%
(c_{1},c_{2},c_{3})\mathrm{,}$ $R_{I}(c_{1},c_{2},c_{3})\mathrm{,}$ $\bar
{R}_{I}(c_{1},c_{2},c_{3})$) of $\mathcal{B}_{i}$ is called $\sigma
$\emph{-colored} if $c_{1}=\sigma(e_{i}),$ $c_{2}=\sigma(e_{i-1})$ and
$c_{3}=\sigma(f_{i}),$ and it is denoted simply by $R$ (resp. $\bar
{R}\mathrm{,}$ $R_{I}\mathrm{,}$ $\bar{R}_{I}$). For example, if a normal
$5$-edge-coloring $\sigma$ of $G$ is as in Figure \ref{Fig_coloringCycle},
then the coloring $R(1,2,3)$ from Figure \ref{Fig_rightColoring} is a $\sigma
$-colored coloring of $\mathcal{B}_{i}.$

A $\sigma$-colored coloring $R$ and/or its variants $\bar{R}\mathrm{,}$
$R_{I}\mathrm{,}$ $\bar{R}_{I}$ have important properties for the construction
of normal $5$-edge-colorings of a superpositioned snark, so let us introduce
those properties. A normal $5$-edge-coloring $\tilde{\sigma}_{i}$ of
$\mathcal{B}_{i}$ is \emph{right-side }$\sigma$\emph{-monochromatic} if
$\tilde{\sigma}_{i}[s_{j}^{r}]\approx(\sigma(e_{i}),\{\sigma(e_{i-1}%
),\sigma(f_{i})\})$ for every $j\in\{1,2,3\}.$ For example, if a coloring
$\sigma$ of edges in $G$ incident to vertices $u_{i}$ is as in Figure
\ref{Fig_coloringCycle}, then the coloring $R(1,2,3)$ from Figure
\ref{Fig_rightColoring} is right-side $\sigma$-monochromatic coloring of
$\mathcal{B}_{i}$. The same holds for colorings $\bar{R}(1,2,3)\mathrm{,}$
$R_{I}(1,2,3)\mathrm{,}$ and $\bar{R}_{I}(1,2,3)\mathrm{.}$ In general, the
following observation holds.

\begin{remark}
\label{Obs_rightR}Let $G$ be a snark with a normal $5$-edge-coloring $\sigma$,
$C$ a cycle in $G,$ and $\tilde{G}=G_{C}(\mathcal{A},\mathcal{B})$ a
superposition of $G$. Colorings $R$, $\bar{R}\mathrm{,}$ $R_{I}$ and $\bar
{R}_{I}$ of $\mathcal{B}_{i}$ are all right-side $\sigma$-monochromatic.
\end{remark}

We will need one more similar definition. A normal $5$-edge-coloring
$\tilde{\sigma}_{i}$ of $\mathcal{B}_{i}$ is \emph{left-side }$\sigma
$\emph{-compatible} if the following holds:

\begin{itemize}
\item $\tilde{\sigma}_{i}[s_{d_{i}}^{l}]\approx(\sigma(e_{i}),\{\sigma
(e_{i-1}),\sigma(f_{i})\})$,

\item $\tilde{\sigma}_{i}[s_{j}^{l}]\approx(\sigma(e_{i-1}),\{\sigma
(e_{i}),\sigma(f_{i})\})$ for every $j\in\{1,2,3\}\backslash\{d_{i}\},$ and

\item there exists a Kempe $(\sigma(e_{i-1}),\sigma(f_{i}))$-chain $P^{l}$
connecting a pair of left semiedges $s_{j}^{l}$ for $j\not =d_{i}.$
\end{itemize}

\noindent For example, the coloring $R(1,2,3)$ from Figure
\ref{Fig_rightColoring} is left-side $\sigma$-compatible coloring of
$\mathcal{B}_{i}$ if $d_{i}=2$ and $\sigma$ is as in Figure
\ref{Fig_coloringCycle}, and for $d_{i}=3$ the same holds for the coloring
$R_{I}(1,2,3)\mathrm{.}$ In general, the following observation holds.

\begin{remark}
\label{Obs_leftR}Let $G$ be a snark with a normal $5$-edge-coloring $\sigma$,
$C$ a cycle in $G,$ and $\tilde{G}=G_{C}(\mathcal{A},\mathcal{B})$ a
superposition of $G$. If $d_{i}=2$ (resp. $d_{i}=3$), then a $\sigma$-colored
coloring $R$ (resp. $R_{I}$) of $\mathcal{B}_{i}$ is left-side $\sigma$-compatible.
\end{remark}

\begin{figure}[h]
\begin{center}%
\begin{tabular}
[t]{ll}%
a) & \raisebox{-0.9\height}{\includegraphics[scale=0.6]{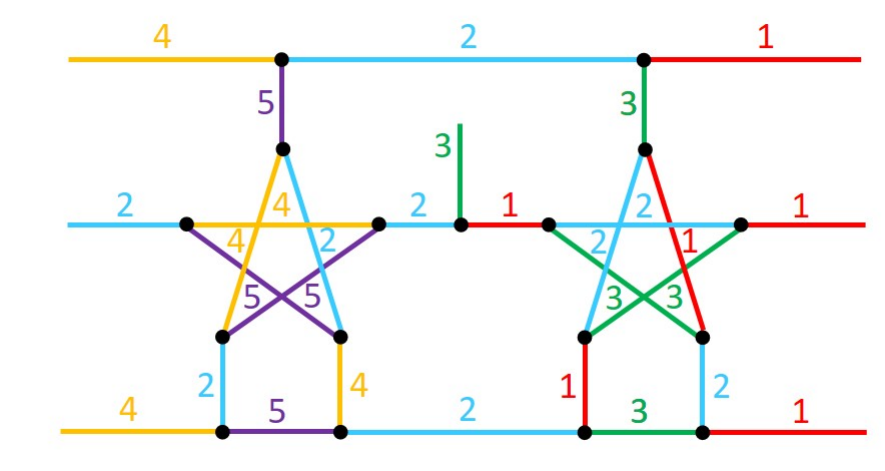}}\\
b) & \raisebox{-0.9\height}{\includegraphics[scale=0.6]{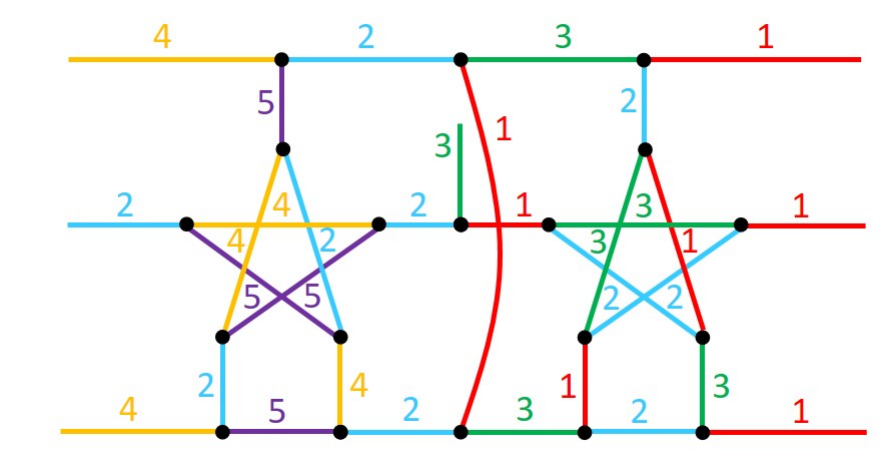}}
\end{tabular}
\end{center}
\caption{For a coloring $\sigma$ of $G$ as in Figure \ref{Fig_coloringCycle},
this figure shows $\sigma$-colored colorings: a) $\tilde{\sigma}_{i-1}=R$ and
$\tilde{\sigma}_{i}=R$ in case of $\mathcal{A}_{i}=A$; b) $\tilde{\sigma
}_{i-1}=R$ and $\tilde{\sigma}_{i}^{\prime}=\bar{R}$ in case of $\mathcal{A}%
_{i}=A^{\prime}.$ }%
\label{Fig_connectionRightLeft}%
\end{figure}

Our aim is to extend a normal $5$-edge-coloring $\sigma$ of a snark $G$ to its
superposition $\tilde{G}=G_{C}(\mathcal{A},\mathcal{B})$ by preserving colors
$\sigma(e)$ for edges $e$ of $G$ outside of the cycle $C.$ For that purpose we
define an induced submultipole $M_{\mathrm{int}}$ of $G$ as the submultipole
induced by the set of vertices $V(G)\backslash V(C),$ i.e. we define
$M_{\mathrm{int}}=G[V(G)\backslash V(C)].$ Notice that $M_{\mathrm{int}}$ is
also an induced submultipole of $\tilde{G}.$ For a normal $5$-edge-coloring
$\sigma$ of $G,$ we also define $\tilde{\sigma}_{\mathrm{int}}$ as the
restriction of $\sigma$ to the submultipole $M_{\mathrm{int}}$ of $G,$ i.e.
$\tilde{\sigma}_{\mathrm{int}}=\left.  \sigma\right\vert _{M_{\mathrm{int}}}.$
In order to construct a normal $5$-edge-coloring $\tilde{\sigma}$ of the
entire $\tilde{G},$ we consider the coloring $\tilde{\sigma}_{\mathrm{int}}$
together with normal $5$-edge-colorings $\tilde{\sigma}_{i}$ of all superedges
$\mathcal{B}_{i},$ and we wish $\tilde{\sigma}_{\mathrm{int}},$ $\tilde
{\sigma}_{i-1}$ and $\tilde{\sigma}_{i}$ to be compatible in $\tilde{G}$ for
every $i.$ If we manage to find such colorings $\tilde{\sigma}_{i},$ then the
construction of $\tilde{\sigma}$ is successfully finished. Notice that the
colorings $\tilde{\sigma}_{i-1},$ $\tilde{\sigma}_{i}$ and $\tilde{\sigma
}_{\mathrm{int}}$ also induce the coloring of $\mathcal{A}_{i},$ so these
three colorings are compatible in $\tilde{G}$ if there exists a coloring
$\tilde{\sigma}$ of a submultipole of $\tilde{G}$ induced by
$V(M_{\mathrm{int}})\cup V(\mathcal{B}_{i-1})\cup V\mathcal{(\mathcal{A}}%
_{i})\cup V(\mathcal{B}_{i})$ such that $\left.  \tilde{\sigma}\right\vert
_{M_{\mathrm{int}}}=\tilde{\sigma}_{\mathrm{int}}$, $\left.  \tilde{\sigma
}\right\vert _{\mathcal{B}_{i-1}}=\tilde{\sigma}_{i-1}$ and $\left.
\tilde{\sigma}\right\vert _{\mathcal{\mathcal{B}}_{i}}=\tilde{\sigma}_{i}.$
The following lemma is the main tool in constructing a normal $5$%
-edge-coloring of a superposition $\tilde{G}$ in our approach.

\begin{lemma}
\label{Lemma_borderCompatibility}Let $G$ be a snark with a normal
$5$-edge-coloring $\sigma$ and $C$ a cycle in $G.$ Let $\tilde{G}%
=G_{C}(\mathcal{A},\mathcal{B})$ be a superposition of $G$, $\tilde{\sigma
}_{i-1}$ a right-side $\sigma$-monochromatic coloring of $\mathcal{B}_{i-1}$,
and $\tilde{\sigma}_{i}$ a left-side $\sigma$-compatible coloring of
$\mathcal{B}_{i}.$

\begin{itemize}
\item If $\mathcal{A}_{i}=A,$ then $\tilde{\sigma}_{i-1},$ $\tilde{\sigma}%
_{i}$ and $\tilde{\sigma}_{\mathrm{int}}$ are compatible in $\tilde{G}.$

\item If $\mathcal{A}_{i}=A^{\prime},$ then then $\tilde{\sigma}_{i-1},$
$\tilde{\sigma}_{i}^{\prime}$ and $\tilde{\sigma}_{\mathrm{int}}$ are
compatible in $\tilde{G},$ where $\tilde{\sigma}_{i}^{\prime}$ denotes a
coloring obtained from $\tilde{\sigma}_{i}$ by swapping colors along the Kempe
chain $P^{l}.$
\end{itemize}
\end{lemma}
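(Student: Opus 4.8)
The plan is to verify compatibility directly by exhibiting, in each of the two cases, an explicit normal $5$-edge-coloring of the submultipole $M^\prime$ of $\tilde G$ induced by $V(M_{\mathrm{int}})\cup V(\mathcal{B}_{i-1})\cup V(\mathcal{A}_i)\cup V(\mathcal{B}_i)$ that restricts to the three given colorings. Since the colorings $\tilde\sigma_{i-1}$, $\tilde\sigma_i$ (or $\tilde\sigma_i^\prime$) and $\tilde\sigma_{\mathrm{int}}$ already agree on the edges of $M_{\mathrm{int}}$, $\mathcal{B}_{i-1}$ and $\mathcal{B}_i$ by hypothesis, the only freedom — and the only thing to check — lies on the edges of $\mathcal{A}_i$ together with the edges joining $\mathcal{A}_i$ to its neighbours, and the verification that every edge touching a vertex of $\mathcal{A}_i$ is poor or rich. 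First I would set up notation: write $a=\sigma(e_{i})$, $b=\sigma(e_{i-1})$, $c=\sigma(f_{i})$, which are three distinct colors since $\sigma$ is proper and $e_{i-1},e_i,f_i$ share the vertex $u_i$ of $G$; name the three edges of $\tilde G$ that enter $\mathcal{A}_i$ from $\mathcal{B}_{i-1}$ (these carry the right-side color schemes of $\tilde\sigma_{i-1}$) and the three that enter $\mathcal{A}_i$ from $\mathcal{B}_i$ (these carry the left-side color schemes of $\tilde\sigma_i$), keeping track of which semiedge of $\mathcal{B}_{i-1}$ is glued to which semiedge of $\mathcal{B}_i$ via the permutation $p_{i-1}$ and the dock $d_i$.

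For the case $\mathcal{A}_i=A$: here $\mathcal{A}_i$ is a single vertex $u_i$ of degree three, so $M^\prime$ is obtained from $M_{\mathrm{int}}\cup\mathcal{B}_{i-1}\cup\mathcal{B}_i$ just by identifying semiedges in pairs and subdividing the ``dock'' edge by $u_i$. Two of the three pairs of semiedges become ordinary edges of $\tilde G$; by right-side $\sigma$-monochromaticity the scheme coming from $\mathcal{B}_{i-1}$ on each such edge is consistent with $(a,\{b,c\})$, and by the second bullet of left-side $\sigma$-compatibility the scheme coming from $\mathcal{B}_i$ on the corresponding edge is consistent with $(b,\{a,c\})$; since the actual colors $\tilde\sigma_{i-1}(s^r_j)$ and $\tilde\sigma_i(s^l_{p_{i-1}(j)})$ must be equal for the gluing to be a proper coloring, and consistency of schemes forces the two ``opposite'' color sets to match up to the standard complement, one checks these edges are each either poor or rich — this is exactly the content of the notion ``consistent'' and needs only the observation that a pair of consistent color schemes always produces a normal edge. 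The third pair is the dock pair: after subdividing by $u_i$, the colors on the two half-edges at $u_i$ are $\tilde\sigma_{i-1}(s^r_{d_i\text{-preimage}})$ on one side and $\tilde\sigma_i(s^l_{d_i})$ on the other; right-side monochromaticity gives the first is consistent with $(a,\{b,c\})$ and the first bullet of left-side compatibility gives the second is consistent with $(a,\{b,c\})$ too, so the two half-edges carry the same color $a$ — but that would be improper at $u_i$ unless we recolor. This is the crux: I would use the Kempe $(b,c)$-chain $P^l$ guaranteed by the third bullet. Swapping $b\leftrightarrow c$ along $P^l$ inside $\mathcal{B}_i$ changes $\tilde\sigma_i$ only at its two endpoints, which are two left semiedges $s^l_j$ with $j\neq d_i$; this turns their color from (say) $b$-type data into $c$-type data without disturbing $s^l_{d_i}$, and leaves the coloring of $\mathcal{B}_i$ normal and still left-side $\sigma$-compatible in the weaker sense needed — one must verify that after this swap the dock half-edge coloring becomes proper at $u_i$ while the other two glued edges stay poor/rich. (Note: I expect the actual statement intends the swap to be applied so that exactly the dock conflict is resolved; the precise bookkeeping of which endpoint of $P^l$ matches which semiedge of $\mathcal{B}_{i-1}$ under $p_{i-1}$ is where care is needed.)

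For the case $\mathcal{A}_i=A^\prime$: now $\mathcal{A}_i$ has three vertices $u_i,u_i^\prime,u_i^{\prime\prime}$, with the dock pair subdivided by $u_i$, the other two pairs subdivided by $u_i^\prime$ and $u_i^{\prime\prime}$, and an extra edge $u_i^\prime u_i^{\prime\prime}$ inserted. The statement says to use $\tilde\sigma_i^\prime$, the coloring obtained from $\tilde\sigma_i$ by swapping along $P^l$ first; so I would start from the already-swapped coloring, under which — by the analysis of the previous case — the dock edge through $u_i$ is fine (the two half-edges at $u_i$ get distinct colors, one of them $a$, consistently with the scheme $(a,\{b,c\})$ on each side). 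The work is at $u_i^\prime$ and $u_i^{\prime\prime}$: each of these has two half-edges coming from the two non-dock glued pairs plus the new edge $u_i^\prime u_i^{\prime\prime}$. After the $P^l$-swap, the two non-dock left semiedges have schemes consistent with $(c,\{a,b\})$-type data (colors $b$ and $c$ got exchanged on exactly those two semiedges), while the matching right semiedges of $\mathcal{B}_{i-1}$ still have schemes consistent with $(a,\{b,c\})$; I would choose the color of the new edge $u_i^\prime u_i^{\prime\prime}$ — there is essentially one forced choice among $\{1,\dots,5\}$ avoiding the four colors already present at $u_i^\prime$ and $u_i^{\prime\prime}$, and the Kempe-chain condition ensures those four colors leave exactly room for it — and then check that $u_i^\prime u_i^{\prime\prime}$ and all six half-edges at $u_i^\prime,u_i^{\prime\prime}$ are poor or rich. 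The main obstacle in both cases is the same: showing that the endpoints of the Kempe chain $P^l$ line up (through the semiedge identification governed by $p_{i-1}$) with the ``right'' non-dock edges so that the swap fixes the dock conflict and simultaneously keeps the other edges normal; once that alignment is pinned down, the rest is the mechanical ``consistent schemes $\Rightarrow$ normal edge'' check and, for $A^\prime$, the single forced coloring of $u_i^\prime u_i^{\prime\prime}$, both of which I would dispatch by a short case analysis on the color schemes using Figure~\ref{Fig_connectionRightLeft} as the template.
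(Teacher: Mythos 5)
The central step of your Case $\mathcal{A}_i=A$ is wrong, and the error comes from misreading what right-side $\sigma$-monochromaticity says about $\mathcal{B}_{i-1}$. With your notation $a=\sigma(e_i)$, $b=\sigma(e_{i-1})$, $c=\sigma(f_i)$, the condition on $\mathcal{B}_{i-1}$ is obtained by shifting the index in the definition from $i$ to $i-1$: it reads $\tilde{\sigma}_{i-1}[s_j^r]\approx(\sigma(e_{i-1}),\{\sigma(e_{i-2}),\sigma(f_{i-1})\})$, so the color on every right semiedge of $\mathcal{B}_{i-1}$ is $b$, not $a$. Consequently the two half-edges meeting at $u_i$ on the subdivided dock edge carry $b$ (from $\tilde{\sigma}_{i-1}$) and $a$ (from $\tilde{\sigma}_i(s^l_{d_i})$), which are distinct; together with $f_i$ colored $c$, the vertex $u_i$ sees exactly $\sigma(u_i)=\{a,b,c\}$ and is automatically proper. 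There is no ``dock conflict'' to resolve, and no Kempe swap is performed in this case --- the lemma's first bullet deliberately uses $\tilde{\sigma}_i$ itself, not $\tilde{\sigma}_i^{\prime}$. Worse, the swap you propose would break the construction: for $\mathcal{A}_i=A$ the two non-dock pairs of semiedges are identified into single, unsubdivided edges, so the two sides must carry equal colors; before the swap both sides carry $b$, while after swapping along the $(b,c)$-chain $P^l$ the $\mathcal{B}_i$-side would carry $c$ and the $\mathcal{B}_{i-1}$-side still $b$, so the coloring would not even be well defined.

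The same index confusion propagates into your Case $\mathcal{A}_i=A^{\prime}$, where you again assert that the right semiedges of $\mathcal{B}_{i-1}$ have schemes consistent with $(a,\{b,c\})$. The actual role of the swap there is the one your argument misses: in $A^{\prime}$ the two non-dock pairs are subdivided by $u_i^{\prime}$ and $u_i^{\prime\prime}$, and without the swap each of these vertices would see the color $b$ twice, once from each side; swapping along $P^l$ turns the $\mathcal{B}_i$-side of both non-dock pairs into $c$, so that $u_i^{\prime}$ and $u_i^{\prime\prime}$ each see $\{b,c\}$ plus the inserted edge $u_i^{\prime}u_i^{\prime\prime}$, which is then colored $a$ so that $\tilde{\sigma}(u_i^{\prime})=\tilde{\sigma}(u_i^{\prime\prime})=\sigma(u_i)$. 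That choice is not ``forced'' by properness (three colors would avoid a clash); it is the choice that makes $u_i^{\prime}u_i^{\prime\prime}$ and $u_i^{j}w_j$ poor and lets normality of $u_i^{j}z_j$ follow from normality of $e_{i-1}$ under $\sigma$ in $G$. Relatedly, your ``consistent schemes imply a normal edge'' shortcut is not the right mechanism for the glued edges $z_jw_j$: their two end-schemes are consistent with the schemes of $e_{i-1}$ at $u_{i-1}$ and at $u_i$ respectively (not with each other), and normality is inherited from the normality of $e_{i-1}$, $e_i$ and $f_i$ in $G$. As written, your plan does not yield a correct proof.
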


\begin{proof}
Since $\mathcal{B}_{i-1}$ is right-side $\sigma$-monochromatic, we have
\[
\tilde{\sigma}_{i-1}[s_{1}^{r}]\approx\tilde{\sigma}_{i-1}[s_{2}^{r}%
]\approx\tilde{\sigma}_{i-1}[s_{3}^{r}]\approx(\sigma(e_{i-1}),\{\sigma
(e_{i-2}),\sigma(f_{i-1})\}).
\]
The choice of colors on the right side to be monochromatic implies that if the
three right semiedges can connect to left superedges of $\mathcal{B}_{i}$ for
one permutation $p_{i-1},$ then they can connect to them for all permutations
$p_{i-1}$. Thus, without the loss of generality we may assume $p_{i-1}%
=(1,2,3),$ and the question is whether for such a convenient choice of the
colors on the right side there exists a coloring of the entire superedge such
that the left side connects to $\mathcal{B}_{i-2}$ for any possible color
combinations $\sigma(u_{i-1}).$

The proof of this lemma, for $p_{i-1}=(1,2,3)$ and $d_{i}=2,$ is illustrated
by Figure \ref{Fig_connectionRightLeft}. Assume that the vertex of
$\mathcal{B}_{i}$ incident to the left semiedge $s_{j}^{l}$ (resp. right
semiedge $s_{j}^{r}$) is denoted by $w_{j}$ (resp. $z_{j}$) as in Figure
\ref{Fig_PetersenSuperedge}. Obviously, $w_{1}=z_{1}.$

We will slightely abuse the notation, and use the same notation $w_{j}$ and
$z_{j}$ in $\mathcal{B}_{i},$ $\mathcal{B}_{i-1}$ and $\mathcal{B}_{i-2},$ as
we usualy mention these vertices in the context of $\tilde{\sigma}_{i}%
(w_{j}),$ $\tilde{\sigma}_{i-1}(w_{j})$ and $\tilde{\sigma}_{i-}(w_{j}),$ to
which superedge we refer should be clear from the subscript of $\tilde{\sigma
}.$ Now, we distinguish the following two cases.

\bigskip\noindent\textbf{Case 1:} $\mathcal{A}_{i}=A.$ Assume that for a
coloring $\tilde{\sigma}$ it holds that $\left.  \tilde{\sigma}\right\vert
_{\mathcal{B}_{i-1}}=\tilde{\sigma}_{i-1},$ $\left.  \tilde{\sigma}\right\vert
_{\mathcal{B}_{i}}=\tilde{\sigma}_{i}$ and $\left.  \tilde{\sigma}\right\vert
_{M_{\mathrm{int}}}=\tilde{\sigma}_{\mathrm{int}}=\left.  \sigma\right\vert
_{M_{\mathrm{int}}}.$ We have to establish that $\tilde{\sigma}$ is well
defined and normal. First, notice that
\[
\tilde{\sigma}(u_{i})=\{\tilde{\sigma}_{i-1}(s_{d_{i}}^{r}),\tilde{\sigma}%
_{i}(s_{d_{i}}^{l}),\tilde{\sigma}_{\mathrm{int}}(f_{i})\}=\{\sigma
(e_{i-1}),\sigma(e_{i}),\sigma(f_{i})\}=\sigma(u_{i}),
\]
so $\tilde{\sigma}$ is proper at $u_{i}$ since $\sigma$ is proper at $u_{i}.$
This also implies that $f_{i}$ is normal by $\tilde{\sigma}$ since it is
normal by $\sigma$ in $G.$ Also, $\tilde{\sigma}_{i-1}(z_{d_{i}}%
)=\{\sigma(e_{i-2}),\sigma(e_{i-1}),\sigma(f_{i-1})\}$ further implies that
$z_{d_{i}}u_{i}$ is normal by $\tilde{\sigma}$ since $e_{i-1}=u_{i-1}u_{i}$ is
normal by $\sigma$ in $G,$ and $\tilde{\sigma}_{i}(w_{d_{i}})=\{\sigma
(e_{i-1}),\sigma(e_{i}),\sigma(f_{i})\}$ further implies that $z_{d_{i}}u_{i}$
is normal by $\tilde{\sigma}$ since $e_{i}=u_{i}u_{i+1}$ is normal by $\sigma$
in $G.$

It remains to consider each edge $z_{j}w_{j}$ for $j\not =d_{i}.$ Notice that
\[
\tilde{\sigma}(z_{j}w_{j})=\tilde{\sigma}_{i-1}(s_{j}^{r})=\tilde{\sigma}%
_{i}(s_{j}^{l})=\sigma(e_{i-1}),
\]
so $\tilde{\sigma}$ is well defined. Further, from $\tilde{\sigma}_{i-1}%
(z_{j})=\{\sigma(e_{i-2}),\sigma(e_{i-1}),\sigma(f_{i-1})\}$ and
$\tilde{\sigma}_{i}(w_{j})=\{\sigma(e_{i-1}),\sigma(e_{i}),\sigma(f_{i})\}$ it
follows that $z_{j}w_{j}$ is normal by $\tilde{\sigma}$, since $e_{i-1}$ is
normal by $\sigma$ in $G.$

\bigskip\noindent\textbf{Case 2:} $\mathcal{A}_{i}=A^{\prime}.$ Assume that
for coloring $\tilde{\sigma}$ it holds that $\left.  \tilde{\sigma}\right\vert
_{\mathcal{B}_{i-1}}=\tilde{\sigma}_{i-1},$ $\left.  \tilde{\sigma}\right\vert
_{\mathcal{B}_{i}}=\tilde{\sigma}_{i}^{\prime}$ and $\left.  \tilde{\sigma
}\right\vert _{M_{\mathrm{int}}}=\tilde{\sigma}_{\mathrm{int}}=\left.
\sigma\right\vert _{M_{\mathrm{int}}}.$ We also define $\tilde{\sigma}%
(u_{i}^{\prime}u_{i}^{\prime\prime})=\sigma(e_{i}).$ Since for $j=d_{i}$ it
holds that $\tilde{\sigma}_{i}^{\prime}[s_{d_{i}}^{l}]\approx\tilde{\sigma
}_{i}[s_{d_{i}}^{l}],$ the properness of $\tilde{\sigma}$ at $u_{i}$ and the
normality of all edges incident to $u_{i}$ in $\tilde{G}$ follows as in Case
1. It remains to establish the properness of $\tilde{\sigma}$ at
$u_{i}^{\prime}$ and $u_{i}^{\prime\prime}$ and the normality of all edges
incident to them. For the sake of notation consistency, we denote
$u_{i}^{\prime}=u_{i}^{2}$ and $u_{i}^{\prime\prime}=u_{i}^{3}$ so that
$u_{i}^{j}$ is adjacent to $w_{j}.$ Notice that
\[
\tilde{\sigma}(u_{i}^{j})=\{\tilde{\sigma}_{i-1}(s_{j}^{r}),\tilde{\sigma}%
_{i}^{\prime}(s_{j}^{l}),\tilde{\sigma}(u_{i}^{\prime}u_{i}^{\prime\prime
})\}=\{\sigma(e_{i-1}),\sigma(f_{i}),\sigma(e_{i})\}=\sigma(u_{i}),
\]
thus $\tilde{\sigma}$ is proper at $u_{i}^{j}$ since $\sigma$ is proper at
$u_{i},$ and this holds for every $j=2,3.$

It remains to consider normality of the edges $u_{i}^{\prime}u_{i}%
^{\prime\prime},$ $u_{i}^{j}w_{j}$ and $u_{i}^{j}z_{j}$ for each $j=2,3.$ From
$\tilde{\sigma}(u_{i}^{\prime})=\tilde{\sigma}(u_{i}^{\prime\prime}%
)=\sigma(u_{i})$ it follows that $u_{i}^{\prime}u_{i}^{\prime\prime}$ is poor
by $\tilde{\sigma}.$ From $\tilde{\sigma}_{i}(w_{j})=\tilde{\sigma}(u_{i}%
^{j})=\sigma(u_{i})$ it follows that $u_{i}^{j}w_{j}$ is also poor by
$\tilde{\sigma}.$ Finally, $\tilde{\sigma}_{i-1}(z_{j})=\sigma(u_{i-1})$
together with $\tilde{\sigma}(u_{i}^{j})=\sigma(u_{i})$ implies that
$u_{i}^{j}z_{j}$ is normal by $\tilde{\sigma}$ since $e_{i-1}$ is normal by
$\sigma.$
\end{proof}

\begin{figure}[h]
\begin{center}%
\begin{tabular}
[t]{ll}%
a) & \raisebox{-0.9\height}{\includegraphics[scale=0.6]{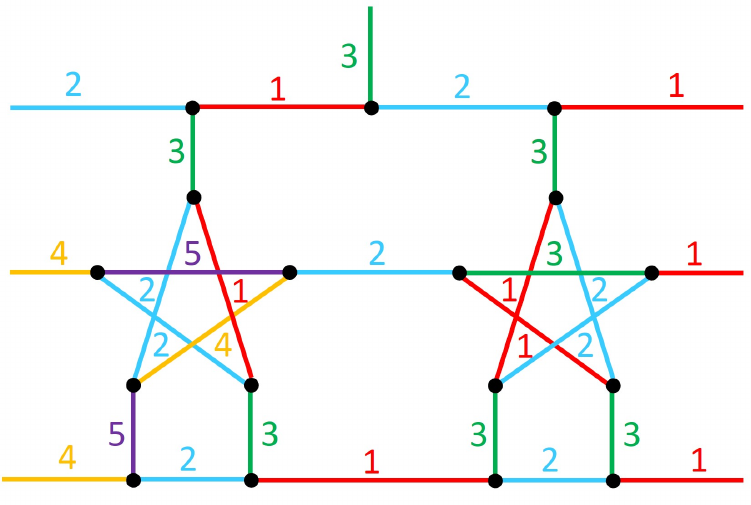}}\\
b) & \raisebox{-0.9\height}{\includegraphics[scale=0.6]{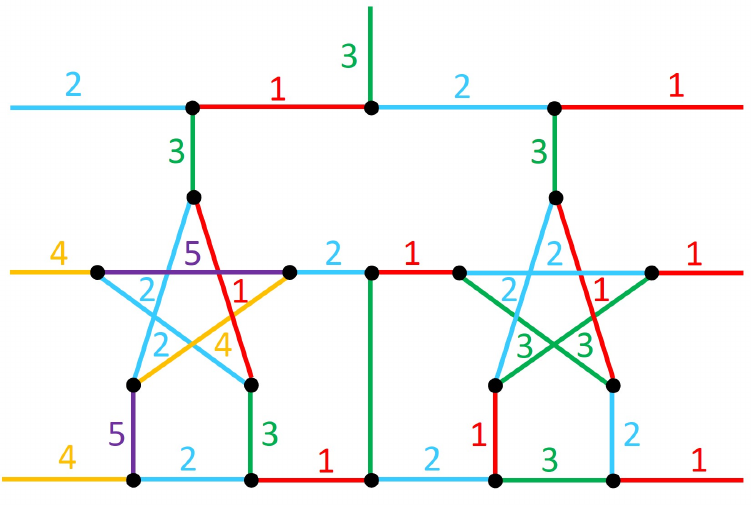}}
\end{tabular}
\end{center}
\caption{For a coloring $\sigma$ of $G$ as in Figure \ref{Fig_coloringCycle},
this figure shows $\sigma$-colored colorings $\tilde{\sigma}_{i-1}$ which is
left-side $\sigma$-compatible and $\tilde{\sigma}_{i}$ which is right-side
$\sigma$-monochromatic in case when $d_{i-1}=d_{i}=1$ and a) $\mathcal{A}%
_{i}=A,$ b) $\mathcal{A}_{i}=A^{\prime}.$}%
\label{Fig10}%
\end{figure}

The following result is established in \cite{SedSkrePaper1}. We give a sketch
of the proof here for the purpose of this paper being self-contained.

\begin{lemma}
\label{Lemma_parJedinica}Let $G$ be a snark with a normal $5$-edge-coloring
$\sigma$, $C$ a cycle in $G,$ and $\tilde{G}=G_{C}(\mathcal{A},\mathcal{B})$ a
superposition of $G$. If $\mathcal{B}_{i-1}$ and $\mathcal{B}_{i}$ are two
consecutive superedges with $d_{i-1}=d_{i}=1,$ then there exist a left-side
$\sigma$-compatible coloring $\tilde{\sigma}_{i-1}$ of $\mathcal{B}_{i-1}$ and
a right-side $\sigma$-monochromatic coloring $\tilde{\sigma}_{i}$ of
$\mathcal{B}_{i}$ such that $\tilde{\sigma}_{i-1},$ $\tilde{\sigma}_{i}$ and
$\tilde{\sigma}_{\mathrm{int}}$ are compatible in $\tilde{G}.$
\end{lemma}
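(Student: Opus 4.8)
The plan is to prove the statement by exhibiting explicit $\sigma$-determined normal $5$-edge-colorings $\tilde\sigma_{i-1}$ of $\mathcal B_{i-1}$ and $\tilde\sigma_i$ of $\mathcal B_i$ (those drawn in Figure~\ref{Fig10}) and then checking the connection across $\mathcal A_i$ by hand, for $\mathcal A_i=A$ and $\mathcal A_i=A'$ separately. Before doing so I would isolate the reason a direct construction is unavoidable here. When $d_{i-1}=d_i=1$ the dock semiedge $s_1^l$ and the right semiedge $s_1^r$ of a Petersen superedge are incident to the same vertex $w_1=z_1$ (this identity is already noted in the proof of Lemma~\ref{Lemma_borderCompatibility}). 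Hence a single coloring cannot be simultaneously right-side $\sigma$-monochromatic and left-side $\sigma$-compatible: the former demands color $\sigma(e_i)$ on $s_1^r$ and the latter demands color $\sigma(e_i)$ on $s_1^l$, two equal colors meeting at $w_1$. Consequently Lemma~\ref{Lemma_borderCompatibility} does not apply across $\mathcal A_i$, and the family $R,\bar R,R_I,\bar R_I$ — which by Observation~\ref{Obs_leftR} realises left-side compatibility only for $d\in\{2,3\}$, and which uses only three colors by Observation~\ref{Obs_rightPoor} — is insufficient. The resolution is that on the two inner ends meeting at $\mathcal A_i$ (the right of $\mathcal B_{i-1}$ and the left of $\mathcal B_i$) neither the monochromatic nor the compatible pattern is imposed, so there is enough freedom to match the two sides directly.

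Concretely, I would take $\tilde\sigma_i$ to be the $\sigma$-colored coloring $R$ of $\mathcal B_i$, which is right-side $\sigma$-monochromatic by Observation~\ref{Obs_rightR} and whose left dock $s_1^l$ then carries $\sigma(e_{i-1})$. For $\mathcal B_{i-1}$ I would exhibit the coloring of Figure~\ref{Fig10}, forced by the gluing to satisfy $\tilde\sigma_{i-1}[s_1^l]\approx(\sigma(e_{i-1}),\{\sigma(e_{i-2}),\sigma(f_{i-1})\})$ and $\tilde\sigma_{i-1}[s_j^l]\approx(\sigma(e_{i-2}),\{\sigma(e_{i-1}),\sigma(f_{i-1})\})$ for $j\neq1$, with a Kempe $(\sigma(e_{i-2}),\sigma(f_{i-1}))$-chain joining $s_2^l$ and $s_3^l$ on the left (so that $\tilde\sigma_{i-1}$ is left-side $\sigma$-compatible), while on the right its dock $s_1^r$ is colored $\sigma(e_i)$. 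Since $\sigma(e_i)$ need not lie in $\{\sigma(e_{i-1}),\sigma(e_{i-2}),\sigma(f_{i-1})\}$, this coloring genuinely uses a fourth color and has rich edges; producing it and verifying its normality is the core construction, which I would read off from Figure~\ref{Fig10}. The two outer properties demanded by the statement — left-side $\sigma$-compatibility of $\tilde\sigma_{i-1}$ and right-side $\sigma$-monochromaticity of $\tilde\sigma_i$ — are then immediate from these colorings.

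It remains to check compatibility across $\mathcal A_i$, which I would do by the same bookkeeping as in the proof of Lemma~\ref{Lemma_borderCompatibility}. For $\mathcal A_i=A$, the vertex $u_i$ subdivides the glued dock edge and also carries $f_i$; I would compute
\[
\tilde\sigma(u_i)=\{\tilde\sigma_{i-1}(s_1^r),\tilde\sigma_i(s_1^l),\sigma(f_i)\}=\{\sigma(e_i),\sigma(e_{i-1}),\sigma(f_i)\}=\sigma(u_i),
\]
so $\tilde\sigma$ is proper at $u_i$ and the three edges at $u_i$ inherit normality from the normality of $e_{i-1},e_i,f_i$ in $G$. For each non-dock pair I would verify that the identified right and left semiedges carry the same color (so the glued edge is well defined) and that its two endpoint color sets make it poor or rich, using that $e_{i-1}$ is normal in $G$. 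For $\mathcal A_i=A'$ I would instead use the colorings of Figure~\ref{Fig10}b, set $\tilde\sigma(u_i'u_i'')=\sigma(e_i)$, and repeat the computation at $u_i$, $u_i'$ and $u_i''$; here the two non-dock connection edges are subdivided, so their halves may carry distinct colors, and I would check that each of $u_i'$ and $u_i''$ again receives $\sigma(u_i)$ and that $u_i'u_i''$ together with its six incident edges are poor or rich. The main obstacle throughout is the first construction: finding a normal $5$-edge-coloring of $(P_{10})_{u,v}$ that is left-side $\sigma$-compatible with the dock forced onto the shared vertex $w_1=z_1$ while still presenting $\sigma(e_i)$ on $s_1^r$ for the gluing; once Figure~\ref{Fig10} supplies it, every remaining verification is a routine edge-by-edge check.
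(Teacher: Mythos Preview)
Your plan is essentially the paper's own: exhibit the explicit pair of colorings drawn in Figure~\ref{Fig10}, then carry out the edge-by-edge compatibility check across $\mathcal A_i$ in the style of Lemma~\ref{Lemma_borderCompatibility}, and finally pass to general colors via Kempe swaps and color permutations. Your added motivation (the $w_1=z_1$ obstruction forcing two distinct colors at the shared vertex, so that no single coloring can be both left-side $\sigma$-compatible and right-side $\sigma$-monochromatic when the dock is $1$) is correct and nicely explains why the $R$-family alone cannot handle this case.

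One small slip to flag: you commit to $\tilde\sigma_i=R$ while simultaneously deferring to Figure~\ref{Fig10} for $\tilde\sigma_{i-1}$, but the paper's Figure~\ref{Fig10} actually pairs $\tilde\sigma_{i-1}$ with $\tilde\sigma_i=R_I$ when $\mathcal A_i=A$ and with $\tilde\sigma_i=R$ when $\mathcal A_i=A'$; your choice of $R$ in the $A$ case would force a different $\tilde\sigma_{i-1}$ than the one drawn, so your glued-color computation $\tilde\sigma_{i-1}(s_1^r)=\sigma(e_i)$ need not match the figure. This does not affect the strategy, only the specific color assignments you would read off. Like the paper's sketch, your outline also leaves the dependence on the permutation $p_{i-1}$ implicit; the full case analysis lives in \cite{SedSkrePaper1}.
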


\begin{proof}
Assume first that $\sigma$ is as in Figure \ref{Fig_coloringCycle}, then
$\tilde{\sigma}_{i-1}$ and $\tilde{\sigma}_{i}$ are as in Figure \ref{Fig10}
from which it is also obvious that they have the desired properties and are
compatible. Notice that $\tilde{\sigma}_{i}=R_{I}$ (resp. $\tilde{\sigma}%
_{i}=R$) in case of $\mathcal{A}_{i}=A$ (resp. $\mathcal{A}_{i}=A^{\prime}$).
Further, if $(\sigma(e_{i-2}),\sigma(f_{i-1}))$ take values from
$\{(5,4),(1,3),(3,1)\}$ instead, then the corresponding colorings are obtained
from these on Figure \ref{Fig10} by the corresponding replacement of colors
along the Kempe chain $P^{l}$ in $\tilde{\sigma}_{i-1}.$ Thus, we have covered
four combinations of colors on the edges incident to $u_{i-1}$ and $u_{i}$ in
$G.$ For all the remaining possibilities of colors on those five edges,
colorings $\tilde{\sigma}_{i-1}$ and $\tilde{\sigma}_{i}$ are obtained from
one of these four cases by the adequate color permutation.
\end{proof}

We will also need a result similar to Lemma \ref{Lemma_parJedinica}, when
$d_{i-1}=1$ and $d_{i}\not =1.$ To establish such a result, in the next
paragraph we introduce two more normal $5$-edge-colorings of the superedge
$(P_{10})_{u,v}.$

\begin{figure}[h]
\begin{center}%
\begin{tabular}
[t]{llll}%
a) & \raisebox{-0.9\height}{\includegraphics[scale=0.6]{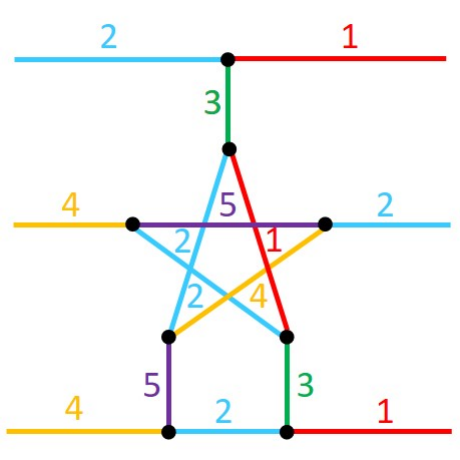}} & b) &
\raisebox{-0.9\height}{\includegraphics[scale=0.6]{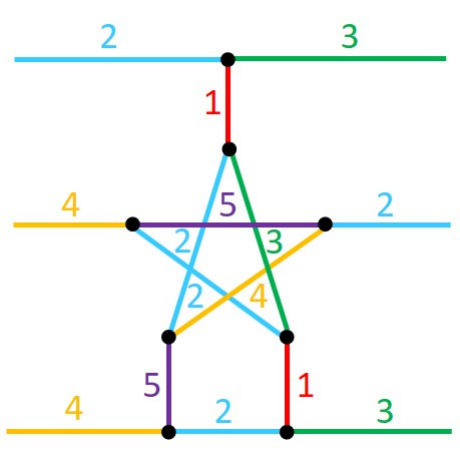}}
\end{tabular}
\end{center}
\caption{Normal $5$-edge colorings: a) $L^{(1)}(1,2,3,(4,5))$, b)
$L^{(2)}(1,2,3,(4,5))$, of the superedge $(P_{10})_{u,v}.$}%
\label{Fig_leftColorings}%
\end{figure}

The normal $5$-edge colorings $L^{(1)}(1,2,3,(4,5))$ and $L^{(2)}%
(1,2,3,(4,5))$ of $(P_{10})_{u,v}$ are defined as in Figure
\ref{Fig_leftColorings}. Notice that these two colorings are similar, i.e.
swapping colors $1$ and $3$ in $L^{(1)}(1,2,3,(4,5))$ results with
$L^{(2)}(1,2,3,(4,5)).$ The motivation for the notation is the following,
these two colorings use only colors $1,2,3,$ except along the Kempe
$(4,5)$-chain $P^{l}$ connecting left semiedges $s_{2}^{l}$ and $s_{3}^{l}.$
Obviously, these two colorings will remain normal $5$-edge colorings if colors
$(4,5)$ along the Kempe chain $P^{l}$ are replaced by $(t_{1},t_{2}%
)\in\{(1,3),(3,1),(5,4)\}.$ Thus obtained colorings of $(P_{10})_{u,v}$ will
be denoted by $L^{(1)}(1,2,3,(t_{1},t_{2}))$ and $L^{(2)}(1,2,3,(t_{1}%
,t_{2})).$

In order to obtain full generality, assume that $c$ is a permutation of the
set of five colors $\{1,2,3,4,5\}$ such that $c(i)=c_{i}$ and $c(t_{i}%
)=c_{i}^{\prime}.$ Applying the color permutation $c$ to a coloring
$L^{(k)}(1,2,3,(t_{1},t_{2}))$ yields a normal $5$-edge-coloring denoted by
$L^{(k)}(c_{1},c_{2},c_{3},(c_{1}^{\prime},c_{2}^{\prime}))$ for
$k\in\{1,2\}.$ A \emph{complementary} coloring of $L^{(k)}(c_{1},c_{2}%
,c_{3},(c_{1}^{\prime},c_{2}^{\prime}))$ is obtained from it by swapping
colors $c_{1}^{\prime}$ and $c_{2}^{\prime}$ along the Kempe chain $P^{l}$ and
it is denoted by $\bar{L}^{(k)}(c_{1},c_{2},c_{3},(c_{1}^{\prime}%
,c_{2}^{\prime})).$ Finally, applying the isomorphism $I$ to $L^{(k)}%
(c_{1},c_{2},c_{3},(c_{1}^{\prime},c_{2}^{\prime}))$ or its complementary
coloring $\bar{L}^{(k)}(c_{1},c_{2},c_{3},(c_{1}^{\prime},c_{2}^{\prime}))$
yields a normal $5$-edge-coloring of $(P_{10})_{u,v}$ denoted by $L_{I}%
^{(k)}(c_{1},c_{2},c_{3},(c_{1}^{\prime},c_{2}^{\prime}))$ and $\bar{L}%
_{I}^{(k)}(c_{1},c_{2},c_{3},(c_{1}^{\prime},c_{2}^{\prime})),$ respectively.

Notice that colorings $L^{(k)}$ and its variants $\bar{L}^{(k)},$ $L_{I}%
^{(k)},$ $\bar{L}_{I}^{(k)}$ are defined for any color permutation $c.$ Which
colors will be used in a coloring of a superedge $\mathcal{B}_{i}%
=(P_{10})_{u,v}$ within a superposition $\tilde{G}=G_{C}(\mathcal{A}%
,\mathcal{B})$ again depends on the coloring $\sigma$ of the edges of $G.$
Namely, let $G$ be a snark with a normal $5$-edge-coloring $\sigma,$ $C$ a
cycle in $G$ and $\tilde{G}=G_{C}(\mathcal{A},\mathcal{B})$ a superposition of
$G.$ A coloring $L^{(k)}(c_{1},c_{2},c_{3},(c_{1}^{\prime},c_{2}^{\prime}))$
is said to be a $\sigma$\emph{-colored} coloring of $\mathcal{B}_{i}$ if
$c_{1}=\sigma(e_{i+1}),$ $c_{2}=\sigma(e_{i}),$ $c_{3}=\sigma(f_{i}),$
$c_{1}^{\prime}=\sigma(e_{i-1})$, $c_{2}^{\prime}=\sigma(f_{i-1}),$ in which
case it is denoted by $L^{(k)}.$ Similarly, we define $\sigma$\emph{-colored}
$\bar{L}^{(k)},$ $L_{I}^{(k)}$ and $\bar{L}_{I}^{(k)}.$ For example, if
$\sigma$ is as in Figure \ref{Fig_coloringCycle}, then both colorings
$L^{(1)}(1,2,3,(4,5))$ and $L^{(2)}(1,2,3,(4,5))$ from Figure
\ref{Fig_leftColorings} are $\sigma$-colored colorings of $\mathcal{B}_{i}$.
Moreover, for such a coloring $\sigma$ those two colorings are also left-side
$\sigma$-compatible. In general, the following observation holds.

\begin{remark}
\label{Obs_leftL}Let $G$ be a snark with a normal $5$-edge-coloring $\sigma$,
$C$ a cycle in $G,$ and $\tilde{G}=G_{C}(\mathcal{A},\mathcal{B})$ a
superposition of $G$. If $d_{i}=1$ for some $i\in\{0,\ldots,g-1\}$, then each
of the colorings $L^{(k)},$ $\bar{L}^{(k)},$ $L_{I}^{(k)}$ and $\bar{L}%
_{I}^{(k)}$ of $\mathcal{B}_{i}$ is left-side $\sigma$-compatible for every
$k\in\{1,2\}.$
\end{remark}

\begin{figure}[ptbh]
\begin{center}%
\begin{tabular}
[t]{lll}%
a) & \raisebox{-0.9\height}{\includegraphics[scale=0.5]{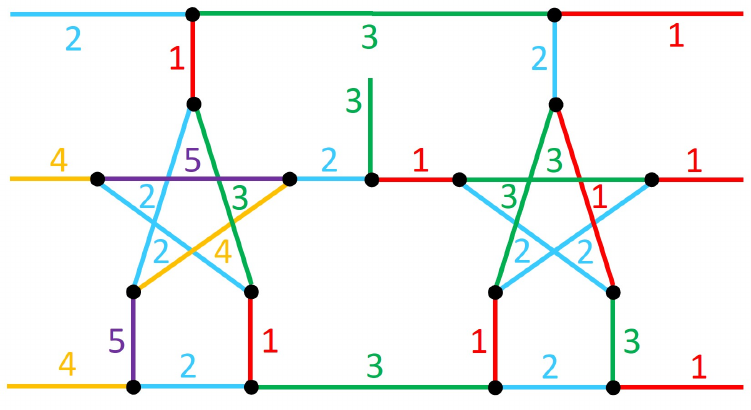}} &
\raisebox{-0.9\height}{\includegraphics[scale=0.5]{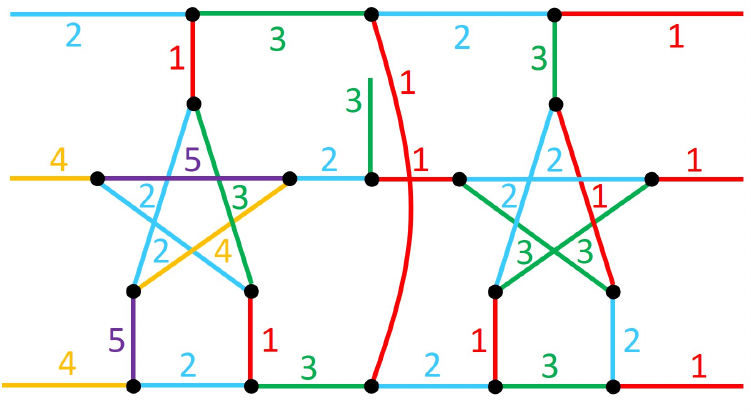}}\\
b) & \raisebox{-0.9\height}{\includegraphics[scale=0.5]{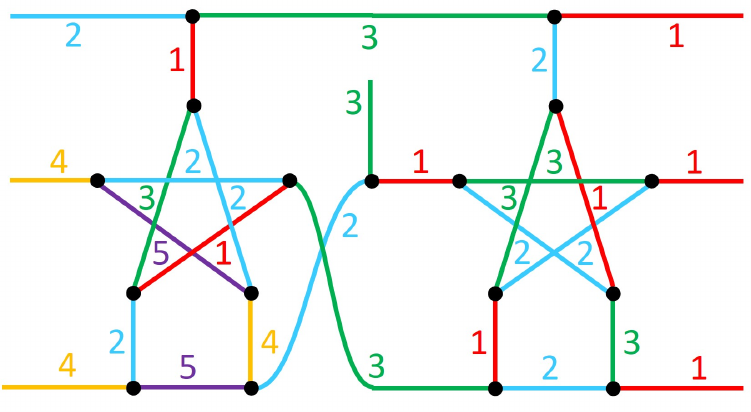}} &
\raisebox{-0.9\height}{\includegraphics[scale=0.5]{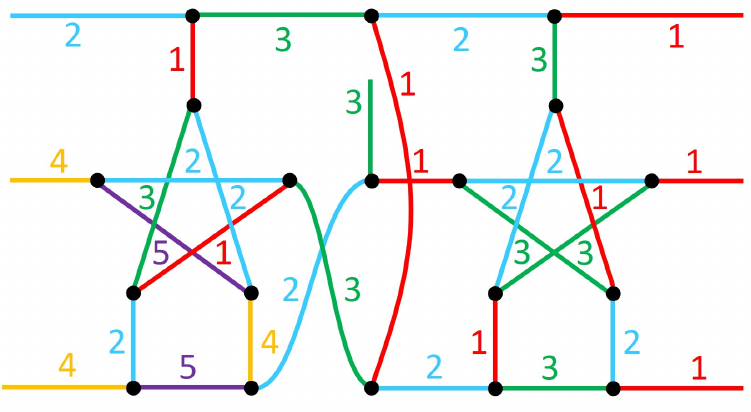}}\\
c) & \raisebox{-0.9\height}{\includegraphics[scale=0.5]{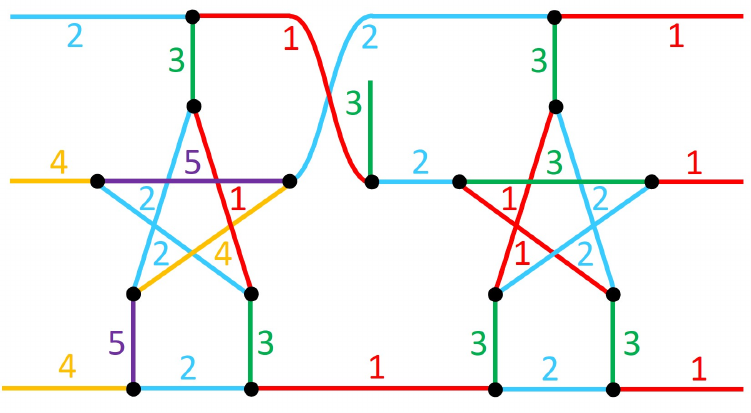}} &
\raisebox{-0.9\height}{\includegraphics[scale=0.5]{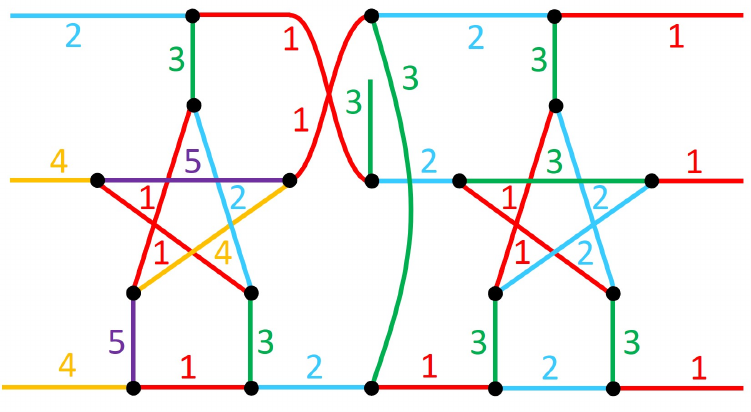}}\\
d) & \raisebox{-0.9\height}{\includegraphics[scale=0.5]{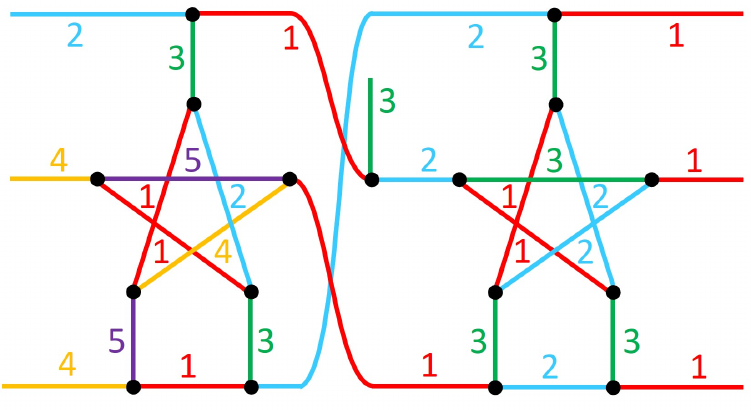}} &
\raisebox{-0.9\height}{\includegraphics[scale=0.5]{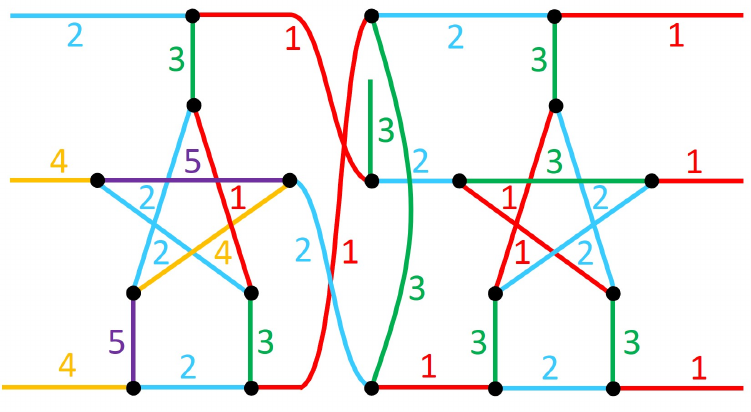}}\\
e) & \raisebox{-0.9\height}{\includegraphics[scale=0.5]{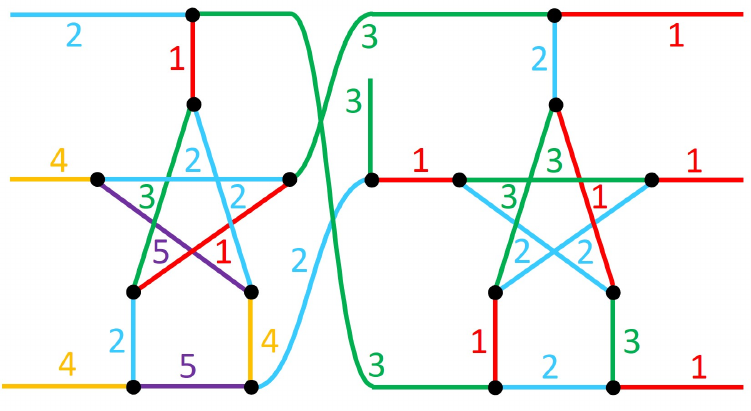}} &
\raisebox{-0.9\height}{\includegraphics[scale=0.5]{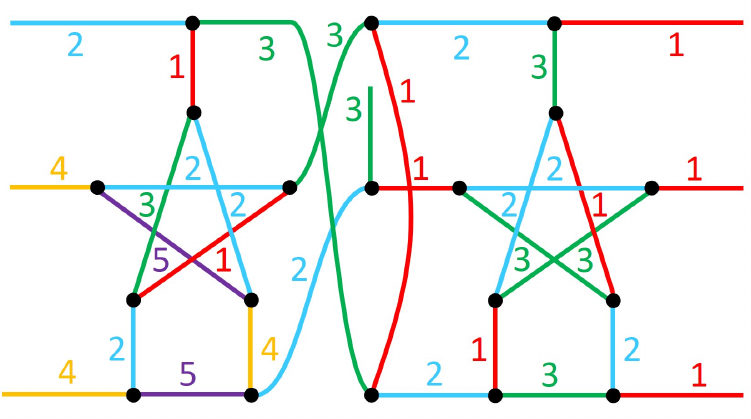}}\\
f) & \raisebox{-0.9\height}{\includegraphics[scale=0.5]{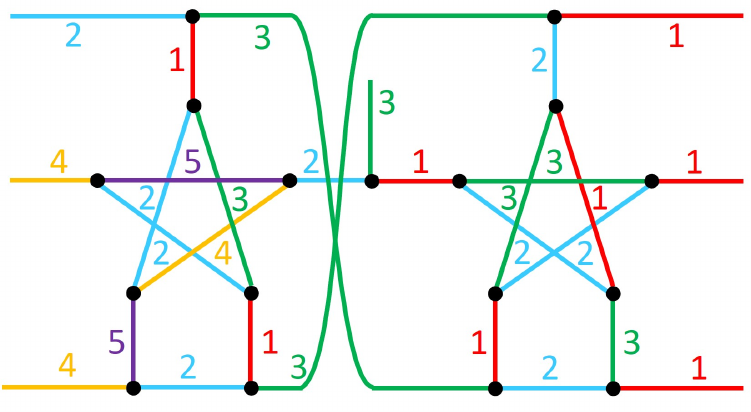}} &
\raisebox{-0.9\height}{\includegraphics[scale=0.5]{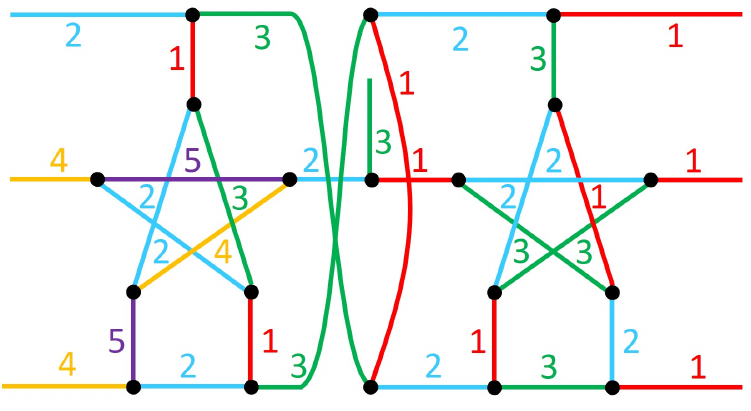}}
\end{tabular}
\end{center}
\caption{For a normal $5$-edge-coloring $\sigma$ of $G$ from Figure
\ref{Fig_coloringCycle}, this figure shows a normal $5$-edge colorings of
$\mathcal{B}_{i-1}$ and $\mathcal{B}_{i}$ compatible with $\sigma$ when
$\mathcal{A}_{i}=A$ (on the left) and $\mathcal{A}_{i}=A^{\prime}$ (on the
right) for $d_{i}=2$ and: a) $p_{i-1}=(1,2,3),$ b) $p_{i-1}=(1,3,2),$ c)
$p_{i-1}=(2,1,3),$ d) $p_{i-1}=(2,3,1),$ e) $p_{i-1}=(3,1,2),$ f)
$p_{i-1}=(3,2,1)$.}%
\label{Fig_jednaJedinica}%
\end{figure}

We now need the following lemma, which complements Lemma
\ref{Lemma_parJedinica}.

\begin{lemma}
\label{Lemma_jednaJedinica}Let $G$ be a snark with a normal $5$-edge-coloring
$\sigma$, $C$ a cycle in $G,$ and $\tilde{G}=G_{C}(\mathcal{A},\mathcal{B})$ a
superposition of $G$. Suppose that $\mathcal{B}_{i-1}$ and $\mathcal{B}_{i}$
are two consecutive superedges such that $d_{i-1}=1$ and $d_{i}\not =1.$ Then,
there exist a left-side $\sigma$-compatible coloring $\tilde{\sigma}_{i-1}$ of
$\mathcal{B}_{i-1}$ and a right-side $\sigma$-monochromatic coloring
$\tilde{\sigma}_{i}$ of $\mathcal{B}_{i}$ such that $\tilde{\sigma}_{i-1},$
$\tilde{\sigma}_{i}$ and $\tilde{\sigma}_{\mathrm{int}}$ are compatible in
$G.$
\end{lemma}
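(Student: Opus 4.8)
The strategy mirrors the proof of Lemma~\ref{Lemma_parJedinica}: exhibit explicit $\sigma$-colored colorings of $\mathcal{B}_{i-1}$ and $\mathcal{B}_i$ for one representative $\sigma$ near $u_{i-1}$ and $u_i$, verify compatibility case by case, and then reduce all remaining colorings of $G$ to that representative by a color permutation. Assume $\sigma$ is as in Figure~\ref{Fig_coloringCycle}. Since $d_{i-1}=1$, Remark~\ref{Obs_leftL} tells us that each of $L^{(k)}$, $\bar L^{(k)}$, $L_I^{(k)}$, $\bar L_I^{(k)}$ is a left-side $\sigma$-compatible coloring of $\mathcal{B}_{i-1}$, so for $\tilde\sigma_{i-1}$ we will pick one of these; its Kempe $(\sigma(e_{i-2}),\sigma(f_{i-1}))$-chain $P^l$ connects two left semiedges distinct from $s_{d_{i-1}}^l=s_1^l$, exactly as the definition of left-side $\sigma$-compatible requires. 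For $\tilde\sigma_i$ we need a right-side $\sigma$-monochromatic coloring, and by Remark~\ref{Obs_rightR} any of $R$, $\bar R$, $R_I$, $\bar R_I$ works. The remaining freedom is in matching the right semiedges of $\mathcal{B}_{i-1}$ to the left semiedges of $\mathcal{B}_i$ through $\mathcal{A}_i$; this is governed by $p_{i-1}$ and by $d_i\in\{2,3\}$.

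The key reduction is the same one used at the start of the proof of Lemma~\ref{Lemma_borderCompatibility}: since $\tilde\sigma_{i-1}$ is right-side $\sigma$-monochromatic, all three right color schemes $\tilde\sigma_{i-1}[s_j^r]$ are consistent, so whether the connection through $\mathcal{A}_i$ can be completed does not depend on $p_{i-1}$. Hence it suffices to treat $p_{i-1}=(1,2,3)$, and then separately $d_i=2$ and $d_i=3$ (and within each, $\mathcal{A}_i=A$ and $\mathcal{A}_i=A'$). For $d_i=2$ we take $\tilde\sigma_i=R$, which by Remark~\ref{Obs_leftR} — wait, we do not need left-side compatibility of $\mathcal{B}_i$ here, only right-side monochromaticity, which $R$ has by Remark~\ref{Obs_rightR}; and then the compatibility of $\tilde\sigma_{i-1}$, $\tilde\sigma_i$ and $\tilde\sigma_{\mathrm{int}}$ follows directly from Lemma~\ref{Lemma_borderCompatibility} applied with the roles of the two superedges as stated (with the Kempe-chain swap on $\mathcal{B}_i$ in the $\mathcal{A}_i=A'$ subcase). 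For $d_i=3$ we instead take $\tilde\sigma_i=R_I$, again right-side $\sigma$-monochromatic, and the semiedge at the dock is now $s_3^l$, so the same application of Lemma~\ref{Lemma_borderCompatibility} goes through. The explicit colorings and the routine verification that every edge across each connector is poor or rich are recorded in Figure~\ref{Fig_jednaJedinica} for $d_i=2$ and all six permutations $p_{i-1}$ (the permutation-independence above makes the last five of these immediate from the first).

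Finally, once the representative $\sigma$ is handled, we observe that the edges incident to $u_{i-1}$ and $u_i$ in $G$ carry at most five colors constrained only by properness at those vertices and by whatever identities $\sigma$ forces (e.g. $\sigma(e_{i-1})$ appears at both $u_{i-1}$ and $u_i$); up to a permutation $c$ of $\{1,2,3,4,5\}$ there are only finitely many such local color patterns, and for each one the colorings $\tilde\sigma_{i-1}$, $\tilde\sigma_i$ are obtained from the representative ones by applying $c$ to all colors. Applying a global color permutation preserves properness, normality, the color-scheme consistency relation $\approx$, and Kempe chains, so the compatibility with $\tilde\sigma_{\mathrm{int}}$ (which is likewise repainted by $c$ on the interior) is preserved; note also that the interior coloring need not actually be repainted, since we may instead permute colors only inside $\mathcal{B}_{i-1}\cup\mathcal{A}_i\cup\mathcal{B}_i$ to match the fixed $\tilde\sigma_{\mathrm{int}}$, exactly as in Lemma~\ref{Lemma_parJedinica}. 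This completes all cases.

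\medskip
\noindent\textbf{Where the difficulty lies.} The conceptual content is light — everything is driven by Lemma~\ref{Lemma_borderCompatibility} and Remarks~\ref{Obs_rightR}, \ref{Obs_leftR}, \ref{Obs_leftL} — so the real work is bookkeeping: choosing, for each pair $(d_i,\mathcal{A}_i)$, the right member of the $R/\bar R/R_I/\bar R_I$ family and the matching member of the $L^{(k)}$ family so that the dock semiedge of $\mathcal{B}_i$ and the Kempe chain $P^l$ of $\mathcal{B}_{i-1}$ line up correctly under $p_{i-1}=(1,2,3)$, and then checking that the hypotheses of Lemma~\ref{Lemma_borderCompatibility} are literally met (in particular that it is $\mathcal{B}_{i-1}$ that must be right-side monochromatic and $\mathcal{B}_i$ that must be left-side compatible — here we are in the reversed situation, so care is needed about which superedge plays which role, and the lemma is applied with the pair $(\mathcal{B}_{i-1},\mathcal{B}_i)$ in the order it is stated). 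I expect the subtlety to be confined to getting the Kempe-chain swap in the $\mathcal{A}_i=A'$ subcase consistent with the already-chosen $P^l$ in $\mathcal{B}_{i-1}$; the figures are there precisely to pin down that the swap does not disturb monochromaticity on the far (right) side of $\mathcal{B}_i$.
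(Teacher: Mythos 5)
There is a genuine gap, and it sits at the heart of your argument. You choose $\tilde\sigma_{i-1}$ from the family $L^{(k)},\bar L^{(k)},L_I^{(k)},\bar L_I^{(k)}$ precisely so that it is left-side $\sigma$-compatible (which is what the lemma demands of $\mathcal{B}_{i-1}$, since $d_{i-1}=1$), and then you assert that ``since $\tilde\sigma_{i-1}$ is right-side $\sigma$-monochromatic'' you may (a) reduce to $p_{i-1}=(1,2,3)$ and (b) invoke Lemma~\ref{Lemma_borderCompatibility} at the junction through $\mathcal{A}_i$. But the $L^{(k)}$-type colorings are \emph{not} right-side $\sigma$-monochromatic: away from the Kempe chain $P^{l}$ they use the three colors $\sigma(e_{i}),\sigma(e_{i-1}),\sigma(f_{i-1})$, so the right color schemes are consistent with $(\sigma(e_{i-1}),\{\sigma(e_{i}),\sigma(f_{i-1})\})$ rather than with $(\sigma(e_{i-1}),\{\sigma(e_{i-2}),\sigma(f_{i-1})\})$ as monochromaticity requires. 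Indeed, if a single coloring of $\mathcal{B}_{i-1}$ were simultaneously left-side $\sigma$-compatible and right-side $\sigma$-monochromatic, this lemma would be unnecessary — one could treat $\mathcal{B}_{i-1}$ and $\mathcal{B}_{i}$ as independent ``singletons'' via Lemma~\ref{Lemma_borderCompatibility}; the entire reason the paper handles the two superedges \emph{jointly} is that no coloring in its arsenal has both properties when the dock is $1$.

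Consequently both the claimed independence of $p_{i-1}$ and the application of Lemma~\ref{Lemma_borderCompatibility} at the internal junction collapse, and what remains of your proof does not verify the one thing the lemma is really about: that the right connector of $\tilde\sigma_{i-1}$ and the left connector of $\tilde\sigma_{i}$ fit together normally across $\mathcal{A}_i$. The paper does this by brute force: for $d_i=2$ it gives a table assigning to each of the six permutations $p_{i-1}$ and each of $\mathcal{A}_i\in\{A,A'\}$ a specific pair drawn from $\{L^{(1)},L^{(2)},L^{(1)}_I,L^{(2)}_I\}$ for $\tilde\sigma_{i-1}$ and from $\{R,\bar R,R_I\}$ for $\tilde\sigma_i$ (together with the color of $u_i'u_i''$), checks one representative color pattern against Figure~\ref{Fig_jednaJedinica}, and then varies the Kempe-chain colors and applies color permutations for the remaining patterns; the case $d_i=3$ is obtained by conjugating the $d_i=2$ construction with the isomorphism $I$. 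The fact that the table's entries genuinely change with $p_{i-1}$ (e.g.\ $L^{(2)}$ with $\bar R$ for $(1,2,3)$ versus $L^{(1)}$ with $R_I$ for $(2,1,3)$) is direct evidence that the permutation cannot be normalized away in this lemma. Your closing reduction of a general $\sigma$ to a representative by a global color permutation is fine and matches the paper, but the core case analysis is missing.
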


\begin{proof}
Assume first that $d_{i}=2.$ We define $\tilde{\sigma}_{i-1}$ and
$\tilde{\sigma}_{i}$ according to the following table
\[%
\begin{tabular}
[c]{|c||c|c||c|c|c|}\hline
& \multicolumn{2}{||c||}{$\mathcal{A}_{i}=A$} &
\multicolumn{3}{||c|}{$\mathcal{A}_{i}=A^{\prime}$}\\\hline
& $\tilde{\sigma}_{i-1}$ & $\tilde{\sigma}_{i}$ & $\tilde{\sigma}_{i-1}$ &
$\tilde{\sigma}_{i}$ & $\tilde{\sigma}(u_{i}^{\prime}u_{i}^{\prime\prime}%
)$\\\hline\hline
$p_{i-1}=(1,2,3)$ & $L^{(2)}$ & $\overset{}{\bar{R}}$ & $L^{(2)}$ & $R$ &
$\sigma(e_{i})$\\\hline
$p_{i-1}=(1,3,2)$ & $L_{I}^{(2)}$ & $\bar{R}$ & $L_{I}^{(2)}$ & $R$ &
$\sigma(e_{i})$\\\hline
$p_{i-1}=(2,1,3)$ & $L^{(1)}$ & $R_{I}$ & $L_{I}^{(1)}$ & $R_{I}$ &
$\sigma(f_{i})$\\\hline
$p_{i-1}=(2,3,1)$ & $L_{I}^{(1)}$ & $R_{I}$ & $L^{(1)}$ & $R_{I}$ &
$\sigma(f_{i})$\\\hline
$p_{i-1}=(3,1,2)$ & $L_{I}^{(2)}$ & $\bar{R}$ & $L_{I}^{(2)}$ & $R$ &
$\sigma(e_{i})$\\\hline
$p_{i-1}=(3,2,1)$ & $L^{(2)}$ & $\bar{R}$ & $L^{(2)}$ & $R$ & $\sigma(e_{i}%
)$\\\hline
\end{tabular}
\ \ \ \ \ \
\]
To see that $\tilde{\sigma}_{i-1},$ $\tilde{\sigma}_{i}$ and $\tilde{\sigma
}_{\mathrm{int}}$ defined as in the above table are indeed compatible in
$\tilde{G},$ assume first $(\sigma(e_{i}),\sigma(e_{i-1}),\sigma
(f_{i}))=(1,2,3).$ If also $(\sigma(e_{i-2}),\sigma(f_{i-1}))=(4,5),$ then it
is enough to see Figure \ref{Fig_jednaJedinica}. Notice that $(\sigma
(e_{i-2}),\sigma(f_{i-1}))$ can also take values $(t_{1},t_{2}%
)=\{(1,3),(3,1),(5,4)\}$ in which case colors $(4,5)$ in a Kempe chain $P^{l}$
of $\tilde{\sigma}_{i-1}$ are replaced by $(t_{1},t_{2})$ and the desired
colorings are obtained. For any other values of $\sigma$ on the edges incident
to $u_{i-1}$ and $u_{i}$ the desired coloring can be obtained from one of
these four situations by an adequate color permutation.

If $d_{i}=3,$ the desired colorings $\tilde{\sigma}_{i-1}$ and $\tilde{\sigma
}_{i}$ are obtained by applying first the isomorphism $I$ to both
$\mathcal{B}_{i-1}$ and $\mathcal{B}_{i},$ then color them as in the previous
case, such colorings are denoted by $\tilde{\sigma}_{i-1}^{\prime}$ and
$\tilde{\sigma}_{i}^{\prime},$ respectively. Then apply again the isomorphism
$I$ to both $\mathcal{B}_{i-1}$ and $\mathcal{B}_{i}$ and their colorings,
i.e. $\tilde{\sigma}_{i-1}=(\tilde{\sigma}_{i-1}^{\prime})_{I}$ and
$\tilde{\sigma}_{i}=(\tilde{\sigma}_{i}^{\prime})_{I}.$
\end{proof}

\section{Main results}

We are now in a position to establish the main result of this paper. Let us
note that in \cite{SedSkrePaper1}, the following theorem is established.

\begin{theorem}
\label{tm_stari}Let $G$ be a snark, $\sigma$ a normal $5$-edge coloring of
$G,$ $C$ a cycle of length $g$ in $G,$ and $\tilde{G}\in\mathcal{G}%
_{C}(\mathcal{A},\mathcal{B})$ a superposition of $G.$ If $p_{i}=(1,2,3)$ and
$d_{i}=1$ for every $i\in\{0,\ldots,g-1\},$ then for even $g$ there exists a
normal $5$-edge coloring $\tilde{\sigma}$ of $\tilde{G}$ with at least $18$
poor edges.
\end{theorem}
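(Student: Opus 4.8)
The plan is to walk around the cycle $C$ superedge by superedge, choosing a left-side $\sigma$-compatible coloring $\tilde{\sigma}_i$ or a right-side $\sigma$-monochromatic coloring for each $\mathcal{B}_i$, and then glue these local colorings together with $\tilde{\sigma}_{\mathrm{int}}$ using Lemmas \ref{Lemma_borderCompatibility}, \ref{Lemma_parJedinica} and \ref{Lemma_jednaJedinica}. Since the hypothesis here is $p_i=(1,2,3)$ and $d_i=1$ for all $i$, every supervertex $\mathcal{A}_i$ sits on the edge identified via $d_i=1$, so the relevant machinery is Lemma \ref{Lemma_parJedinica}: for each pair of consecutive superedges $\mathcal{B}_{i-1},\mathcal{B}_i$ with $d_{i-1}=d_i=1$ there is a left-side $\sigma$-compatible $\tilde{\sigma}_{i-1}$ and a right-side $\sigma$-monochromatic $\tilde{\sigma}_i$ that are compatible with $\tilde{\sigma}_{\mathrm{int}}$ at $\mathcal{A}_i$. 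First I would fix a reference coloring of $\mathcal{B}_i$ (the $\sigma$-colored $R$ or $R_I$ from Remark \ref{Obs_rightPoor} and Remark \ref{Obs_leftR}), observing that it is simultaneously right-side $\sigma$-monochromatic (Remark \ref{Obs_rightR}) and, after the Kempe swap dictated by $\mathcal{A}_i\in\{A,A'\}$ in Lemma \ref{Lemma_borderCompatibility}, compatible on the left with the adjacent superedge.

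The key point is the parity bookkeeping. Each application of Lemma \ref{Lemma_parJedinica} consumes two consecutive superedges, and for even $g$ the cycle closes up: index the superedges $\mathcal{B}_0,\dots,\mathcal{B}_{g-1}$, pair them as $(\mathcal{B}_{2j},\mathcal{B}_{2j+1})$ for $j=0,\dots,g/2-1$, and note that consecutive pairs share the supervertex $\mathcal{A}_{2j+1}$ handled by Lemma \ref{Lemma_borderCompatibility} while the junction $\mathcal{A}_{2j}$ between pair $j-1$ and pair $j$ is handled because the right end of $\mathcal{B}_{2j-1}$ was chosen right-side $\sigma$-monochromatic and the left end of $\mathcal{B}_{2j}$ left-side $\sigma$-compatible. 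Because $g$ is even the alternating roles close consistently around $C$ with no defect. I would then define $\tilde{\sigma}$ on $\tilde G$ to agree with $\tilde{\sigma}_{\mathrm{int}}$ outside $C$, with the chosen $\tilde{\sigma}_i$ on each $\mathcal{B}_i$, and with the induced coloring on each $\mathcal{A}_i$; normality at every edge then follows edge-class by edge-class exactly as in the proof of Lemma \ref{Lemma_borderCompatibility} (edges inside a $\mathcal{B}_i$ are normal since each local coloring is normal; edges at the supervertices and the identified edges $z_jw_j$ are normal because the corresponding edges $e_{i-1}$, $e_i$, $f_i$ are normal in $\sigma$).

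Finally, for the poor-edge count: by Remark \ref{Obs_rightPoor} every $\sigma$-colored coloring $R$, $\bar R$, $R_I$, $\bar R_I$ of a superedge uses only three colors, hence contains $9$ poor edges. In the construction above at least the superedges serving as the right-side $\sigma$-monochromatic partners (one per pair, so $g/2\ge 1$ of them) receive such a three-color coloring, giving at least $9$ poor edges; pushing a second such superedge into the count — for instance by checking that in at least one pair the left-side partner can also be taken to be a three-color $R$-type coloring rather than an $L$-type one, which is exactly what Figure \ref{Fig10} exhibits since there $\tilde{\sigma}_i\in\{R,R_I\}$ — yields $18$. The main obstacle I anticipate is precisely this last bookkeeping step: one must be careful that the two superedges whose colorings contribute the $18$ poor edges are compatible with the global gluing and with whatever Kempe swaps Lemma \ref{Lemma_borderCompatibility} forces at the supervertices, so I would isolate one consecutive pair $(\mathcal{B}_{i-1},\mathcal{B}_i)$, invoke Lemma \ref{Lemma_parJedinica} to get both of them colored by $R$/$R_I$-type colorings (as in Figure \ref{Fig10}), and color the remaining superedges arbitrarily among the admissible choices, counting only these two guaranteed contributions.
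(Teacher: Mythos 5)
Your overall strategy is the one the paper's framework is built on: this theorem is actually quoted from \cite{SedSkrePaper1} and not reproved here, but the proof of Theorem \ref{tm_main} uses exactly your decomposition --- group the superedges into consecutive pairs with $d_{i-1}=d_i=1$, color each pair by Lemma \ref{Lemma_parJedinica} (left member left-side $\sigma$-compatible, right member right-side $\sigma$-monochromatic), and glue adjacent pairs with Lemma \ref{Lemma_borderCompatibility}; evenness of $g$ is what lets the pairing close up. So the construction and the compatibility argument are fine.

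The genuine problem is your final poor-edge count. First, you write that the right-side partners number $g/2\ge 1$, which only guarantees $9$ poor edges; but a cycle has $g\ge 3$, so an even cycle has $g\ge 4$ and hence at least \emph{two} pairs, i.e.\ at least two superedges carrying a coloring from $\{R,\bar R,R_I,\bar R_I\}$, each contributing $9$ poor edges by Observation \ref{Obs_rightPoor}. That simple observation already gives $18$ and is how the paper counts in Theorem \ref{tm_main}. Second, the rescue you propose instead --- taking the \emph{left} member of some pair to be an $R$-type three-color coloring --- does not work and misreads Figure \ref{Fig10}: in that figure it is $\tilde\sigma_i$ (the right member) that equals $R$ or $R_I$, while the left member $\tilde\sigma_{i-1}$ must be left-side $\sigma$-compatible, which requires a Kempe $(\sigma(e_{i-2}),\sigma(f_{i-1}))$-chain on its left connector; for generic $\sigma$ these are two colors outside $\{\sigma(e_{i-1}),\sigma(e_{i-2}),\sigma(f_{i-1})\}$, so the left member is an $L$-type coloring using five colors, not a three-color $R$-type one, and Observation \ref{Obs_rightPoor} does not apply to it. Replace that step by the $g\ge 4$ count and the proof is complete.
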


Moreover, regarding cycles $C$ with an odd length $g,$ an example of the
superposition $p_{i}=(1,2,3)$ and $d_{i}=1$ for every $i\in\{0,\ldots,g-1\}$
is provided in \cite{SedSkrePaper1} such that the superposition $\tilde{G}$
cannot be colored by preserving the colors of $\sigma$ outside the cycle $C.$
Now, we can extend the results to all the remaining ways of semiedge
identification of superedges $\mathcal{B}_{i}.$

\begin{theorem}
\label{tm_main}Let $G$ be a snark, $\sigma$ a normal $5$-edge coloring of $G,$
$C$ a cycle of length $g$ in $G,$ and $\tilde{G}\in\mathcal{G}_{C}%
(\mathcal{A},\mathcal{B})$ a superposition of $G.$ If there exists at least
one $i\in\{0,\ldots,g-1\}$ such that $p_{i}(1)\not =1$ or $d_{i}\not =1$, then
there exists a normal $5$-edge coloring $\tilde{\sigma}$ of $\tilde{G}$ with
at least $18$ poor edges.
\end{theorem}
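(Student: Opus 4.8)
The plan is to distribute the superedges $\mathcal{B}_0,\ldots,\mathcal{B}_{g-1}$ into maximal consecutive blocks according to the value of the dock, so that inside a block every dock equals $1$ and at the block boundaries a dock differs from $1$; the hypothesis guarantees at least one such boundary exists. Along a block of consecutive docks equal to $1$, I would color the superedges alternately so that each $\mathcal{B}_i$ is simultaneously left-side $\sigma$-compatible (for $\mathcal{B}_{i}$) and right-side $\sigma$-monochromatic (for $\mathcal{B}_{i-1}$), gluing consecutive pairs via Lemma~\ref{Lemma_parJedinica} and invoking Lemma~\ref{Lemma_borderCompatibility} at each internal join $\mathcal{A}_i$. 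This is exactly the mechanism of Theorem~\ref{tm_stari}, except that a closed even cycle is now replaced by a path-like segment of the cycle, so the parity obstruction disappears.

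The key new ingredient is the transition at a boundary where $d_{i-1}=1$ and $d_i\neq1$. Here I would apply Lemma~\ref{Lemma_jednaJedinica}, which supplies a left-side $\sigma$-compatible coloring $\tilde\sigma_{i-1}$ of $\mathcal{B}_{i-1}$ and a right-side $\sigma$-monochromatic coloring $\tilde\sigma_i$ of $\mathcal{B}_i$ that are compatible with $\tilde\sigma_{\mathrm{int}}$ through $\mathcal{A}_i$, for \emph{every} right-semiedge permutation $p_{i-1}$ (the table in that lemma's proof covers all six). I should also handle the symmetric boundary $d_{i-1}\neq1$, $d_i=1$: by Remark~\ref{Obs_rightR} the colorings $R,\bar R,R_I,\bar R_I$ of $\mathcal{B}_{i-1}$ are always right-side $\sigma$-monochromatic regardless of $d_{i-1}$, and by Remark~\ref{Obs_leftL} any $L^{(k)}$-type coloring of $\mathcal{B}_i$ is left-side $\sigma$-compatible since $d_i=1$, so Lemma~\ref{Lemma_borderCompatibility} applies directly. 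The remaining case is a boundary with $d_{i-1}\neq1$ and $d_i\neq1$ (two adjacent non-unit docks): after applying the isomorphism $I$ of the superedge, a non-unit dock can be normalized — recall from Remark~\ref{Obs_leftR} that for $d_i=2$ the coloring $R$ is left-side $\sigma$-compatible and for $d_i=3$ the coloring $R_I$ is — so combining Remark~\ref{Obs_rightR} with Remark~\ref{Obs_leftR} again puts us in the situation of Lemma~\ref{Lemma_borderCompatibility}.

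Assembling the pieces: pick colorings $\tilde\sigma_i$ of all $\mathcal{B}_i$ so that each consecutive pair $(\tilde\sigma_{i-1},\tilde\sigma_i)$, together with $\tilde\sigma_{\mathrm{int}}$, is compatible in $\tilde G$ via one of the lemmas above, according to whether the join $\mathcal{A}_i$ sits inside a dock-$1$ block or at a boundary; since each lemma guarantees compatibility with the \emph{same} fixed $\tilde\sigma_{\mathrm{int}}=\sigma|_{M_{\mathrm{int}}}$ and only constrains the semiedge color schemes at $\mathcal{A}_i$, and since consecutive constraints on a common $\mathcal{B}_i$ are reconciled by the freedom to pass to a complementary coloring along $P^l$ (as in Lemma~\ref{Lemma_borderCompatibility}, Case~2), the local colorings patch to a single normal $5$-edge-coloring $\tilde\sigma$ of $\tilde G$. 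The hard part is the bookkeeping at the boundaries — verifying that going around the whole cycle the choices close up consistently (in particular that wherever two lemmas overlap on one superedge they demand compatible color schemes, which is why the $\approx$-relation and the Kempe-chain swap are built into every statement). For the poor-edge count, each $\mathcal{B}_i$ is colored by one of $R,\bar R,R_I,\bar R_I$ or one of the $L^{(k)}$-variants; by Remark~\ref{Obs_rightPoor} an $R$-type coloring contributes $9$ poor edges, and the $L^{(k)}$-type colorings use only three colors off the chain $P^l$, so each contributes many poor internal edges as well — in any case at least one $\mathcal{B}_i$ is of $R$-type (pick the block boundary handled by Lemma~\ref{Lemma_jednaJedinica}, whose table always assigns an $R$-variant to $\tilde\sigma_i$), giving $9$ poor edges there, plus the join vertices $u_i$ (and $u_i',u_i''$ when $\mathcal{A}_i=A^{\prime}$) contribute further poor edges as computed in Lemma~\ref{Lemma_borderCompatibility}, comfortably reaching the claimed bound of $18$.
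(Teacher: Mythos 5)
Your overall strategy — decompose the cycle into runs of dock-$1$ superedges separated by non-unit docks, pair up superedges inside a run via Lemma~\ref{Lemma_parJedinica}, handle the transition $d_{i-1}=1,\ d_i\neq 1$ via Lemma~\ref{Lemma_jednaJedinica}, treat isolated non-unit docks as self-contained $R$-type pieces, and glue everything with Lemma~\ref{Lemma_borderCompatibility} — is the paper's strategy in outline. But three genuine gaps remain. First, your opening claim that ``the hypothesis guarantees at least one such boundary exists'' is false as stated: the hypothesis allows $d_i=1$ for \emph{every} $i$ with only some $p_i(1)\neq 1$, in which case your decomposition produces a single closed dock-$1$ block and the parity obstruction of the odd-$g$ case of Theorem~\ref{tm_stari} has not disappeared at all. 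The paper disposes of this case by a preliminary reduction (reversing the orientation of $C$ turns a non-identity $p_i$ into a non-unit dock), and without that step your argument does not cover the full hypothesis. Second, the step you yourself flag as ``the hard part'' is in fact the central missing idea. A single dock-$1$ superedge cannot in general be colored so as to be simultaneously left-side $\sigma$-compatible and right-side $\sigma$-monochromatic — that is precisely why Lemma~\ref{Lemma_parJedinica} treats \emph{pairs}. Consequently a maximal run of dock-$1$ superedges can only be fully paired when it has even length; when it has odd length one superedge is left over, and the resolution is to absorb the following non-unit-dock superedge into the last pair (the paper's ``odd conjoined pair'', handled by Lemma~\ref{Lemma_jednaJedinica}) and to use Lemma~\ref{Lemma_jednaJedinica} only in that parity case, not at every boundary. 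Your proposal never performs this parity bookkeeping, so the pairing inside a block of even length with your unconditional use of Lemma~\ref{Lemma_jednaJedinica} at its right end does not close up.

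Third, the poor-edge count does not reach $18$. By Observation~\ref{Obs_rightPoor} an $R$-variant contributes $9$ poor edges, so the bound requires \emph{two} superedges carrying $R$-variants; one $R$-type superedge plus ``further poor edges at the join vertices'' gives roughly $9$ plus a handful, not $18$, and the $L^{(k)}$-variants are not claimed (and not shown) to contribute $9$ poor edges each since they genuinely use five colors along the chain $P^l$. The paper closes this by a short case analysis on the number of conjoined pairs: with no even-chain all $g\geq 3$ superedges are $R$-colored singletons; with exactly one conjoined pair, $g\geq 3$ forces at least one singleton in addition to the $R$-colored right member of the pair; with two or more conjoined pairs, each contributes an $R$-colored right member. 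You would need to supply an argument of this kind to justify the constant $18$.
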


\begin{proof}
First notice the following, if $d_{i}=1$ for every $i\in\{0,\ldots,g-1\},$
then the assumption of the theorem implies $p_{i}(1)\not =1$ for some $i.$ In
that case, relabeling vertices of the cycle $C$ by $u_{i}=u_{g-1-i}^{\prime}$,
i.e. changing the direction of the cycle $C$, yields a superposition in which
$d_{i}\not =1$ for at least one $i.$ Thus, the assumption of the theorem
reduces to $d_{i}\not =1$ for some $i,$ say $d_{0}\not =1.$

We say that a superedge $\mathcal{B}_{i}$ is a \emph{start }superedge if
$d_{i}=1$ and $d_{i-1}\not =1,$ and it is an \emph{end }superedge if $d_{i}=1$
and $d_{i+1}\not =1,$ where indices are taken modulo $g.$ Let $\mathcal{B}%
_{i^{\ast}}$ be a start superedge for $i^{\ast}\in\{0,\ldots,g-1\},$ an end
superedge $\mathcal{B}_{i^{\ast}+r}$ \emph{corresponds} to the start superedge
$\mathcal{B}_{i^{\ast}}$ if $r\geq0$ is the smallest number such that
$\mathcal{B}_{i^{\ast}+r}$ is an end superedge, i.e. if there are no end
superedges between $\mathcal{B}_{i^{\ast}}$ and $\mathcal{B}_{i^{\ast}+r}.$
Notice that this also means there are no start superedges between a pair of
corresponding start and end superedges. Since we assumed $d_{0}\not =1,$ at
least one pair of corresponding start and end superedges in $\tilde{G}$ exists
if there is at least one superedge $\mathcal{B}_{i}$ in $\tilde{G}$ with
$d_{i}=1.$

Let $\mathcal{B}_{i^{\ast}}$ be a start superedge and $\mathcal{B}_{i^{\ast
}+r}$ its corresponding end superedge, an \emph{even-chain} $S_{r}%
(\mathcal{B}_{i^{\ast}})$ of the start superedge $\mathcal{B}_{i^{\ast}}$ is
defined as the sequence of consecutive superedges $\mathcal{B}_{i^{\ast}%
},\mathcal{B}_{i^{\ast}+1},\ldots,\mathcal{B}_{i^{\ast}+r}$ if $r$ is odd and
$\mathcal{B}_{i^{\ast}},\mathcal{B}_{i^{\ast}+1},\ldots,\mathcal{B}_{i^{\ast
}+r+1}$ if $r$ is even. Notice that the number of superedges in an even-chain
is always even. Notice also, in the case of odd $r$ all superedges in an
even-chain have their dock equal to $1,$ i.e. $d_{i^{\ast}+p}=1$ for every
$p\in\{0,\ldots,r\}.$ In case of even $r$ all superedges in an even-chain
except the last one have their dock equal to $1,$ while the last superedge
$\mathcal{B}_{i^{\ast}+r+1}$ has $d_{i^{\ast}+r+1}\not =1.$ Notice that
even-chains are disjoint, i.e. every superedge $\mathcal{B}_{i}$ of $\tilde
{G}$ belongs to at most one even-chain.

We would like to color each superedge by a $\sigma$-compatible coloring which
is both right-side $\sigma$-monochromatic and left-side $\sigma$-compatible.
This is not always possible, so in some cases we consider jointly two
consecutive superedges. Namely, we divide all $\mathcal{B}_{i}$'s as follows.
If $\mathcal{B}_{i}$ does not belong to any even-chain it is called a
\emph{singleton}. A pair of superedges $\mathcal{B}_{i^{\ast}+2p}$ and
$\mathcal{B}_{i^{\ast}+2p+1}$ which belong to an even-chain $S_{r}%
(\mathcal{B}_{i^{\ast}})$ is called a \emph{conjoined pair}, where
$\mathcal{B}_{i^{\ast}+2p}$ is the \emph{left} and $\mathcal{B}_{i^{\ast
}+2p+1}$ the \emph{right} superedge of the conjoined pair. Recall that the two
consecutive superedges of an even chain both have the dock equal to $1$,
except for the pair of the last two superedges in the even chain
$S_{r}(\mathcal{B}_{i^{\ast}})$ when $r$ is even. These pairs will have to be
treated separately, hence we say that a conjoined pair is \emph{odd }if $2p=r$
and $r$ is even, otherwise it is \emph{regular}.

Now, for every $i\in\{0,\ldots,g-1\}$ we define a coloring $\tilde{\sigma}%
_{i}$ of $\mathcal{B}_{i}$ as follows. If $\mathcal{B}_{i}$ is a singleton
then $\tilde{\sigma}_{i}$ is defined according to the following table%
\[%
\begin{tabular}
[c]{|l|l|l|}\hline
$\mathcal{A}_{i}$ & $d_{i}$ & $\tilde{\sigma}_{i}$\\\hline\hline
$A$ & $2$ & $R$\\\cline{2-3}
& $3$ & $R_{I}$\\\hline
$A^{\prime}$ & $2$ & $\overset{}{\bar{R}}$\\\cline{2-3}
& $3$ & $\overset{}{\bar{R}}_{I}$\\\hline
\end{tabular}
\ \ .
\]
Further, if $\mathcal{B}_{i-1}$ and $\mathcal{B}_{i}$ are a regular conjoined
pair, then $\tilde{\sigma}_{i-1}$ and $\tilde{\sigma}_{i}$ are defined as in
Lemma \ref{Lemma_parJedinica}. Finally, if $\mathcal{B}_{i-1}$ and
$\mathcal{B}_{i}$ are an odd conjoined pair, then $\tilde{\sigma}_{i-1}$ and
$\tilde{\sigma}_{i}$ are defined as in Lemma \ref{Lemma_jednaJedinica}.
Additionally, in the case of every conjoined pair $\mathcal{B}_{i-1}$ and
$\mathcal{B}_{i}$, if $\mathcal{A}_{i-1}=A^{\prime}$ the colors along the
Kempe chain $P^{l}$ in $\tilde{\sigma}_{i-1}$ are then further swapped, this
is necessary in order for the left connector of $\mathcal{B}_{i-1}$ to be
compatible over $\mathcal{A}_{i-1}=A^{\prime}$ to right connector of
$\mathcal{B}_{i-1}$ which is right-side $\sigma$-monochromatic.

It is now necessary to establish that $\tilde{\sigma}_{i-1}$, $\tilde{\sigma
}_{i}$ and $\tilde{\sigma}_{\mathrm{int}}$ are compatible in $\tilde{G}$ for
every $i\in\{0,\ldots,g-1\},$ i.e. that a coloring $\tilde{\sigma}$ of
$\tilde{G}$ such that $\left.  \tilde{\sigma}\right\vert _{M_{\mathrm{int}}%
}=\tilde{\sigma}_{\mathrm{int}}$ and $\left.  \tilde{\sigma}\right\vert
_{\mathcal{B}_{i}}=\tilde{\sigma}_{i}$ for every $i$ is a well defined normal
$5$-edge-coloring of $\tilde{G}.$ First, we establish compatibility within
conjoined paires, i.e. that the left superedge and the right superedge of such
pair are mutually compatible and also compatible with $\tilde{\sigma
}_{\mathrm{int}}.$

If $\mathcal{B}_{i-1}$ and $\mathcal{B}_{i}$ form a regular conjoined pair,
this implies that $d_{i-1}=d_{i}=1$ as illustrated by Figure \ref{Fig10}, and
it is clear from the figure that the left and the right superedge are
compatible for the chosen color combination, and that the same holds for all
color combinations inherited from $\sigma$ is established in Lemma
\ref{Lemma_parJedinica}. On the other hand, if $\mathcal{B}_{i-1}$ and
$\mathcal{B}_{i}$ form an odd conjoined pair, then $d_{i-1}=1$ and
$d_{i}\not =1$ as illustrated by Figure \ref{Fig_jednaJedinica}. For one
possible color combination inherited from $\sigma,$ this figure demonstrates
that the left and the right superedge of the pair is compatible mutually and
with $\tilde{\sigma}_{\mathrm{int}}.$ In Lemma \ref{Lemma_jednaJedinica} it is
established that the same holds for all possible color combinations.

Hence, we now have a sequence along the cycle $C$ of either singletons or
conjoined pairs which are all compatible within, and it remains to establish
their mutual compatibility. On each such "chunk" which consists of either only
one or of two superedges, the leftmost connector is left-side $\sigma
$-compatible and the rightmost connector is is right-side $\sigma
$-monochromatic (for singleton this is due to Observation \ref{Obs_leftR}, for
regular conjoined pair due to Lemma \ref{Lemma_parJedinica} and for odd
conjoined pair due to Lemma \ref{Lemma_jednaJedinica}). So, when the two such
"chunks" connect, the right side of the left "chunk" is right-side $\sigma
$-monochromatic and left side of the right "chunk" is left-side $\sigma
$-compatible, so the two chunks are compatible mutually and with
$\tilde{\sigma}_{\mathrm{int}}$ due to Lemma \ref{Lemma_borderCompatibility}.

It remains to prove that the constructed coloring $\tilde{\sigma}$ of
$\tilde{G}$ contains at least 18 poor edges. Notice that for the length $g$ of
the cycle $C$ it holds that $g\geq3.$ Recall that by Observation
\ref{Obs_rightPoor} each of the colorings $R$, $\bar{R},$ $R_{I}$ or $\bar
{R}_{I}$ contains $9$ poor edges. We shall call these four colorings
\emph{right} colorings, so it is sufficient to establish that at least two
superedges of $\tilde{G}$ are colored by a right coloring. If $\tilde{G}$ does
not contain an even-chain, then $\mathcal{B}_{i}$ is a singleton for every
$i\in\{0,\ldots,g-1\}$ and hence colored by a right coloring, the claim now
follows from $g\geq3.$ If $\tilde{G}$ contains precisely one conjoined pair,
then $g\geq3$ implies that $\tilde{G}$ contains at least one singleton. In
this case a singleton and the right superedge of a conjoined pair are colored
by a right coloring. Finally, if $\tilde{G}$ contains at least two conjoined
pairs, then the right superedge of each conjoined pair is colored by a right
coloring, and we are finished.
\end{proof}

Notice that Theorems \ref{tm_stari} and \ref{tm_main} provide a normal
$5$-edge-coloring of a superposition $\tilde{G}$ in all cases when a coloring
of the superposition can be constructed from a normal $5$-edge-coloring of an
underlaying snark $G$ by preserving colors of $\sigma$ outside the cycle $C.$
The only remaining case is the case of a cycle $C$ of odd length along which
superedges are connected to each other so that $p_{i}=(1,2,3)$ and $d_{i}=1$
for every $i.$ In this case an example is provided for which superposition
cannot be colored by preserving colors of $\sigma.$ Possibly, this might be
exploited to construct a counterexample to the Petersen Coloring Conjecture,
if it does not hold in general.

\section{Concluding remarks}

In this paper we consider superpositions of a snark $G$ along a cycle $C.$
Vertices of $C$ are superpositioned by one of two simple supervertices $A$ or
$A^{\prime}$ shown in Figure \ref{Fig_supervertex} and edges of $C$ are
superpositioned by a proper superedge $(P_{10})_{u,v}$ obtained from the
Petersen graph $P_{10}$ by removing two non-adjacent vertices $u,v$ from it
which is illustrated by Figure \ref{Fig_PetersenSuperedge}. A semiedge of the
left connector of $(P_{10})_{u,v}$ which connects to the semiedge of $A$
(resp. $A^{\prime}$) incident to $u$ is called the dock. In
\cite{SedSkrePaper1}, superpositions where $s_{1}^{l}$ is the dock for each
superedge superpositioned along $C$ are considered. Assuming that $G$ has a
normal $5$-edge-coloring $\sigma$, for an even cycle $C$ it is established
that $\sigma$ can be extended to a normal $5$-edge-coloring of a
superposition. In the case of an odd cycle $C$ such extension is not always
possible, i.e. the example of a superposition along an odd cycle is given such
that a normal $5$-edge-coloring which preserves coloring $\sigma$ outside $C$
does not exist.

In this paper we consider all remaining connections of superedges and
supervertices, i.e. superpositions where the dock is $s_{2}^{l}$ or $s_{3}%
^{l}$ for at least one superedge superpositioned along $C.$ For all such
superpositions we establish that a normal $5$-edge-coloring of $G$ can be
extended to the superposition while preserving coloring of $G$ outside $C.$
Moreover, our construction confirms the conjecture regarding the number of
poor edges proposed in \cite{SedSkrePaper1} for the considered class of
snarks. To be more precise, let $P_{10}^{\Delta}$ denote the graph obtained
from $P_{10}$ by truncating one vertex, the following conjecture is proposed
in \cite{SedSkrePaper1}.

\begin{conjecture}
Let $G$ be a bridgeless cubic graph. If $G\not =P_{10}$, then $G$ has a normal
$5$-edge-coloring with at least one poor edge. Moreover, if additionaly
$G\not =P_{10}^{\Delta}$, then $G$ has a normal $5$-edge-coloring with at
least $6$ poor edges.
\end{conjecture}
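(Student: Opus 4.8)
The plan is to reduce the conjecture to snarks and then to attempt a structural induction built on the superposition machinery of this paper together with \cite{SedSkrePaper1} and \cite{SedSkePaper2}. First I would dispose of the $3$-edge-colorable graphs: if $G$ is $3$-edge-colorable then any proper $3$-edge-coloring is normal and makes every edge poor, since the two endpoints of each edge of a cubic graph jointly see exactly the three colors. A bridgeless cubic graph has $|E(G)|=3|V(G)|/2\ge 6$, with equality only for $K_{4}$, so such a $G$ has at least $6$ poor edges, and $K_{4}$ already shows the bound $6$ is tight among $3$-edge-colorable graphs. Hence both assertions hold for every $3$-edge-colorable $G$, and it remains to treat snarks. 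Observe also that the second assertion implies the first for every $G\ne P_{10},P_{10}^{\Delta}$, so the only extra obligation for the first assertion is to exhibit one normal $5$-edge-coloring of $P_{10}^{\Delta}$ carrying a poor edge.

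For snarks I would induct on $|V(G)|$. The base cases are the smallest snarks: the Petersen graph $P_{10}$, whose normal $5$-edge-colorings are forced to have every edge rich (so $P_{10}$ is the genuine exception to the first assertion, with $0$ poor edges), and its truncation $P_{10}^{\Delta}$, which admits normal $5$-edge-colorings but none with as many as $6$ poor edges (the exception to the second assertion); these, and any further small snarks that turn out to be irreducible, would be verified directly. Note $P_{10}^{\Delta}$ cannot be $3$-edge-colorable, for otherwise the previous paragraph would already grant it $6$ poor edges; thus it is genuinely a snark. For the inductive step I would locate in $G$ a reducible configuration --- a short cycle, a nontrivial edge cut, or a cycle $C$ along which $G$ is a superposition of a strictly smaller snark $G'$ by Petersen or Flower superedges --- color $G'$ by the inductive hypothesis with at least $6$ poor edges, and then rebuild a normal $5$-edge-coloring of $G$ while preserving $\sigma$ off $C$. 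The compatibility results of Section~3, namely Lemma~\ref{Lemma_borderCompatibility} driven by Lemmas~\ref{Lemma_parJedinica} and~\ref{Lemma_jednaJedinica}, show that such a cyclic reconstruction in fact produces at least $18\ge 6$ poor edges (compare Theorems~\ref{tm_stari} and~\ref{tm_main}), so the quantitative part of the induction is preserved with room to spare.

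The decisive obstacle lies in the base of the induction, not in the step. To close the argument one would need a decomposition theorem guaranteeing that every snark outside a known finite list arises from a strictly smaller snark by one of the admissible reductions; no such theorem is available, and merely guaranteeing that an arbitrary snark possesses any normal $5$-edge-coloring at all is precisely the Petersen Coloring Conjecture (Conjecture~\ref{Con_normal}). Consequently the present conjecture is at least as strong as Conjecture~\ref{Con_normal}, and a full proof appears out of reach by present methods. What is realistically attainable along these lines --- and what the results of this paper secure --- is the conjecture restricted to superpositions of already-colored snarks along a cycle: for every such $\tilde{G}$ the construction of Theorem~\ref{tm_main} (and Theorem~\ref{tm_stari} in the remaining even-cycle case) furnishes a normal $5$-edge-coloring with at least $18$ poor edges, and since the two exceptional graphs $P_{10}$ and $P_{10}^{\Delta}$ never occur as nontrivial superpositions of this kind, both assertions hold on this class.
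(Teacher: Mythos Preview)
The statement you are attempting to prove is presented in the paper as an \emph{open conjecture} (quoted from \cite{SedSkrePaper1}), not as a theorem; the paper offers no proof of it and does not claim one. Its only engagement with the conjecture is to remark that Theorem~\ref{tm_main} supports it on the specific family of superpositioned snarks treated here, by producing normal $5$-edge-colorings with at least $18$ poor edges. There is therefore no ``paper's own proof'' against which to compare your proposal.

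You correctly diagnose why no proof is available: already the first assertion of the conjecture implies Conjecture~\ref{Con_normal}, since it asserts that every bridgeless cubic $G\neq P_{10}$ carries \emph{some} normal $5$-edge-coloring, while $P_{10}$ itself has one with all edges rich. Your proposed induction would require a structure theorem to the effect that every snark outside a finite list is obtained from a strictly smaller snark via one of the reductions you allow; no such theorem is known, and you say so. Thus your proposal is, by your own admission, not a proof but a sketch of what one might hope for, and its final paragraph agrees with the paper's actual claim: the conjecture is verified only on the superposition class covered by Theorems~\ref{tm_stari} and~\ref{tm_main}.

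One small point: your argument that $P_{10}^{\Delta}$ is not $3$-edge-colorable is circular as written, because it relies on the not-yet-performed ``direct verification'' that $P_{10}^{\Delta}$ admits no normal $5$-edge-coloring with six poor edges. That $P_{10}^{\Delta}$ is indeed not $3$-edge-colorable (contracting the triangle would yield a $3$-edge-coloring of $P_{10}$) is true, but needs an independent one-line argument rather than an appeal to the conjecture's exception clause.
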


For all graphs in the class of superpositioned snarks considered in this
paper, we constructed a normal $5$-edge-coloring with at least $18$ poor edges
(see Theorem \ref{tm_main}), which supports the above conjecture.

\bigskip

\bigskip\noindent\textbf{Acknowledgments.}~~Both authors acknowledge partial
support of the Slovenian research agency ARRS program\ P1-0383 and ARRS
project J1-3002. The first author also the support of Project
KK.01.1.1.02.0027, a project co-financed by the Croatian Government and the
European Union through the European Regional Development Fund - the
Competitiveness and Cohesion Operational Programme.

\end{document}